\newtheorem*{theoremSN} {Theorem 1}
\numberwithin{equation}{section}
\theoremstyle{plain}
\newtheorem{theorem} {Theorem} [section]
\newtheorem{lemma} [theorem] {Lemma}
\newtheorem{corollary} [theorem] {Corollary}
\newtheorem{proposition} [theorem] {Proposition}
\theoremstyle{definition}
\newtheorem{remark} [theorem] {Remark}
\newtheorem*{exampleSN} {Example}
\newtheorem*{acknow} {Acknowledgments}
\renewcommand \parallel {/\kern-3pt/}
\numberwithin{equation}{section}
\theoremstyle{plain}
\theoremstyle{definition}
\renewcommand \parallel {/\kern-3pt/}
\DeclareMathOperator{\rad}{rad}
\DeclareMathOperator{\aut}{Aut}
\DeclareMathOperator{\der}{Der}
\DeclareMathOperator{\Hom}{Hom}
\DeclareMathOperator{\End}{End}
\begin{document}

\title [The Automorphisms group]{The Automorphisms group of a Current Lie algebra}

\author[Ochoa]{Jes\'us Alonso Ochoa Arango$^\dagger$}
\thanks{$^\dagger$Supported by Project ID: 8433}
\address{\noindent $^\dagger$Departamento de Matem\'aticas, Facultad de ciencias,
Pontificia Universidad Javeriana. Bogot\'a, Colombia.}
\email{jesus.ochoa@javeriana.edu.co}

\author[Rojas]{Nadina Rojas$^\ddagger$}
\thanks{$^\ddagger$Partially supported by \textsc{Conicet}, \textsc{Secyt and FaCEFyN Univ.\thinspace Nac.\thinspace
C\'{o}rdoba}.}
\address{\noindent $^\ddagger$CIEM-CONICET and FCEFyN Universidad Nacional de C\'ordoba. (5000) Ciudad
Universitaria, C\'ordoba-Argentina.}
\email{nadina.rojas@unc.edu.ar}

\subjclass[2010]{17B10, 17B30, 17B40, 17B45}
\keywords{Current Lie algebra, Levi's decomposition, radical, automorphism group, derivation algebra, Heisenberg Lie algebra}         

\maketitle

\begin{abstract}
Let
$\mathfrak{g}$ be a finite dimensional complex Lie algebra and let
$A$ be a finite dimensional complex,  associative and commutative  algebra with unit.
We describe the structure of the derivation Lie algebra
of the current Lie algebra
$\mathfrak{g}_A= \mathfrak{g} \otimes A$, denoted by
$\der(\mathfrak{g}_A)$. Furthermore, we obtain the Levi decomposition of
$\der(\mathfrak{g}_A)$.

As a consequence of the last result, if
$\mathfrak{h}_m$ is the Heisenberg Lie algebra of dimension
$2 m + 1$, we obtain a faithful representation of
$\der(\mathfrak{h}_{m,k})$ of the current
truncated Heisenberg Lie algebra
$\mathfrak{h}_{m,k}= \mathfrak{h}_m \otimes \mathbb{C}[t]/ (t^{k + 1})$ for all
positive integer
$k$.   
\end{abstract}

\section{Introduction}


The automorphism group of a Lie algebra $\mathfrak{g}$, denoted by $\aut(\mathfrak{g})$,
has been  extensively studied due to its important role in several branches of mathematics and physics
in describing \textit{symmetries} associated with \textit{systems} (see for instance \cite{BM, J2, St}).
In \cite{CGS}, the automorphism groups of nilpotent Lie algebras are used to obtain
the classification of such algebras in low dimensions over arbitrary fields. 
It is well known that the Lie algebra of the Lie group $\aut(\mathfrak{g})$ is the Lie algebra $\der(\mathfrak{g})$ (see for instance \cite[Prop. 1.120]{K}).

Let
$A$ be an complex, associative and commutative algebra with unit. The main purpose of this work is to study the derivation Lie algebra of a \emph{current Lie algebra}
$\mathfrak{g}_A := \mathfrak{g} \otimes_{\mathbb{C}} A$ with the bracket
$$
[X_1 \otimes a_1, X_2 \otimes a_2]= [X_1, X_2] \otimes a_1 a_2,
$$
for $X_i \in \mathfrak{g}$, $a_i \in A$ for $i=1, 2$.  In particular, if
$A_k = \mathbb{C}[t]/(t^{k + 1})$ with $k$ a positive integer,  the current Lie algebra
$\mathfrak{g}_k= \mathfrak{g} \otimes A_k$ is known as the \emph{truncated current Lie algebra}. This class of Lie algebra  has been studied in \cite{RT, T}
for semisimple Lie algebras; the authors have built a system of algebraically independent generators of the algebra of invariant polynomial functions on
$\mathfrak{g}_k$ and an affine subspace of ${\mathfrak{g}_k}^*$, transverse to the regular orbits.

Some homological aspects of the truncated current Lie algebras have also been considered in \cite{FGT, H2, HW, Ku}, while some applications to the theory of PDE's
has been given in \cite{CO, MM}. These Lie algebras have also appeared in  the literature associated with Hanlon's conjecture \cite{H1,H2,H3,H4} which assures that, for certain classes of complex Lie algebras, the homology of $\mathfrak{g}_k$ is related to the homology of $\mathfrak{g}$ by mean of an isomorphism
\begin{equation}\label{M_property}
H_{*}(\mathfrak{g}_k) \cong H_{*}(\mathfrak{g})^{\otimes (k + 1)},
\end{equation}
of graded vector spaces.
For semisimple Lie algebras it was shown to be true in \cite{FGT}, while for the Lie algebra of strictly upper triangular matrices was shown to be false in \cite{Ku}. However, despite of the efforts made, the conjecture remains open for the other Lie algebras.

It is well known that the Heisenberg Lie algebra plays an important role in
mathematics and physics.
In \cite{HW} the authors investigate the isomorphism
\eqref{M_property} for the Heisenberg Lie algebra
$\mathfrak{h}_1$ of dimension $3$. They provide evidence that the conjecture will be true for this algebra by proving an special case involving finer gradings on the homology of this Lie algebra, but the conjecture still remains open for this and all other Heisenberg Lie algebras $\mathfrak{h}_m$ of dimension $2m+1$.

We are specially interested in the Hanlon's conjecture of the Heisenberg Lie algebra
$\mathfrak{h}_m$. Based on ideas that we have learned from \cite{CT} and
from the first author of the aforementioned paper, we think that if the
conjecture is true for
$\mathfrak{h}_{m,k}$ then there is a structure of
$\mathfrak{sp}_{m}(\mathbb{C})$-module on the homology
$H_{*}(\mathfrak{h}_{m,k})$ and the structure of this module should be \textit{very transparent}.
In order to introduce, in a convenient way, the corresponding action, we must study the structure of the Lie algebra $\der(\mathfrak{g}_{k})$.
Our study starts with a result due to Zusmanovich in \cite{Z} concerning to a vector space decomposition of
$\der(\mathfrak{g}_A)$ in direct sum of distinguished vector subspaces.
We describe the behavior of the Lie bracket of
$\der(\mathfrak{g}_A)$ with respect to one of these vector spaces.
Let
$A$ be an algebra and let
$\mathfrak{g}$ be a Lie algebra. Let us denote
$\mathfrak{J}$ by the Jacobson radical of
$A$,
$\der(A)$ the set of derivations of
$A$, $\mathfrak{r}$ the radical of
$\mathfrak{g}$,
$\mathfrak{s}$ a Levy factor of
$\mathfrak{g}$ and by
$\mathfrak{z}(\mathfrak{g})$ the center of the Lie algebra
$\mathfrak{g}$. The main result of this paper is the following:

\begin{theoremSN}\label{Teo:Principal}
Let
$A$ be  a complex finite dimensional, associativite and commutative algebra with unit and let
$S$ be a semi-simple subalgebra of
$A$ given by the Wedderburn-Malcev Theorem. Let
$\mathfrak{g}$ be a complex finite dimensional Lie algebra such that the center
$\mathfrak{z}(\mathfrak{g})$ is contained in
$[\mathfrak{g}, \mathfrak{g}]$.  If
$ \der(A)$ is a solvable Lie algebra then the radical of the Lie algebra
$\der(\mathfrak{g}_A)$,
$\widetilde{\mathfrak{r}}$, is
\begin{align*}
\widetilde{\mathfrak{r}}=& \left(\mathfrak{s} \otimes \mathfrak{J}\right) +
 \left(\mathfrak{r} \otimes A\right) + \left(\Hom_{\mathfrak{g}}(\mathfrak{g},\mathfrak{g}) \otimes \der(A)\right)
+\\
       &+ \Hom(\mathfrak{g}/ [\mathfrak{g},\mathfrak{g}], \mathfrak{z}) \otimes \End(A).
\end{align*}
and
$\mathfrak{s} \otimes S$ correspond to a semisimple Lie subalgebra of
$\der(\mathfrak{g}_A)$.
\end{theoremSN}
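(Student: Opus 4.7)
The approach is to use Zusmanovich's vector-space decomposition of $\der(\mathfrak{g}_A)$ as the scaffold, then refine it with the Levi decomposition $\mathfrak{g} = \mathfrak{s} \ltimes \mathfrak{r}$ and the Wedderburn-Malcev splitting $A = S \oplus \mathfrak{J}$. Zusmanovich writes $\der(\mathfrak{g}_A)$ as a direct sum of distinguished subspaces, which in the present setting pair naturally with each of the four terms of the claimed $\widetilde{\mathfrak{r}}$ together with the would-be Levi factor $\mathfrak{s} \otimes S$. The inner-type contribution $\operatorname{ad}(\mathfrak{g}) \otimes A$ splits, modulo the kernel $\mathfrak{z}(\mathfrak{g}) \otimes A$ of $\operatorname{ad}$, as $(\mathfrak{s} \otimes S) \oplus (\mathfrak{s} \otimes \mathfrak{J}) \oplus (\mathfrak{r} \otimes A)$, and the hypothesis $\mathfrak{z}(\mathfrak{g}) \subseteq [\mathfrak{g},\mathfrak{g}]$ will be what absorbs this kernel into the last two summands of $\widetilde{\mathfrak{r}}$.

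To establish the semisimplicity of $\mathfrak{s} \otimes S$, I would observe that, since $S$ is a finite-dimensional semisimple commutative complex algebra, it is isomorphic to $\mathbb{C}^{n}$ for some $n$, so $\mathfrak{s} \otimes S \cong \mathfrak{s}^{n}$ is semisimple. Closure under the bracket of $\der(\mathfrak{g}_A)$ follows because $S$ is a subalgebra of $A$, whence the bracket of elements of $\mathfrak{s} \otimes S$ stays inside $[\mathfrak{s},\mathfrak{s}] \otimes S = \mathfrak{s} \otimes S$.

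Next I would verify that $\widetilde{\mathfrak{r}}$ is a solvable ideal. Each summand is individually solvable: $\mathfrak{s} \otimes \mathfrak{J}$ is nilpotent because $\mathfrak{J}$ is; $\mathfrak{r} \otimes A$ is solvable because $\mathfrak{r}$ is solvable and $A$ is commutative; $\Hom_{\mathfrak{g}}(\mathfrak{g}, \mathfrak{g}) \otimes \der(A)$ is solvable by the hypothesis on $\der(A)$; and $\Hom(\mathfrak{g}/[\mathfrak{g},\mathfrak{g}], \mathfrak{z}(\mathfrak{g})) \otimes \End(A)$ is abelian, since the composition of any two such maps vanishes thanks to $\mathfrak{z}(\mathfrak{g}) \subseteq [\mathfrak{g},\mathfrak{g}]$. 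To promote this to the statement that $\widetilde{\mathfrak{r}}$ is an ideal and that the sum is itself solvable, I would rely on the bracket description announced just before the theorem: the mixed brackets among the summands either land back inside $\widetilde{\mathfrak{r}}$ or in a piece of higher $\mathfrak{J}$-filtration, so the derived series terminates and the commutator with the complement $\mathfrak{s} \otimes S$ stays inside $\widetilde{\mathfrak{r}}$.

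The main obstacle, and therefore the crux of the argument, is maximality: proving that $\der(\mathfrak{g}_A)/\widetilde{\mathfrak{r}}$ is semisimple, equivalently that $\widetilde{\mathfrak{r}}$ contains every solvable ideal of $\der(\mathfrak{g}_A)$. This requires isolating, inside each Zusmanovich summand, exactly which part is solvable and which part lifts the Levi factor. A priori $\der(\mathfrak{g}) \otimes A$ contains outer derivations of $\mathfrak{g}$ which, when tensored with a non-nilpotent idempotent of $A$, could in principle enlarge the semisimple part; I would argue that the solvability of $\der(A)$ forces the $\Hom_{\mathfrak{g}}(\mathfrak{g},\mathfrak{g}) \otimes \der(A)$ contribution into the radical, while $\mathfrak{z}(\mathfrak{g}) \subseteq [\mathfrak{g},\mathfrak{g}]$ pushes the central outer derivations into $\Hom(\mathfrak{g}/[\mathfrak{g},\mathfrak{g}], \mathfrak{z}(\mathfrak{g})) \otimes \End(A)$, leaving $\mathfrak{s} \otimes S$ as the only semisimple complement.
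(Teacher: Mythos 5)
There is a genuine gap, and it sits at the very first step: you read $\mathfrak{s}$ and $\mathfrak{r}$ as the Levi factor and radical of $\mathfrak{g}$, and accordingly base the decomposition on the ``inner-type'' piece $\mathrm{ad}(\mathfrak{g})\otimes A$. The paper does something different: the first summand of Zusmanovich's decomposition is $\der(\mathfrak{g})\otimes A$ --- \emph{all} derivations of $\mathfrak{g}$, not just inner ones --- and the Levi--Malcev theorem is applied to $\der(\mathfrak{g})$, so that $\mathfrak{s}$ is a Levi factor of $\der(\mathfrak{g})$ and $\mathfrak{r}=\rad\der(\mathfrak{g})$, giving $\der(\mathfrak{g})\otimes A=(\mathfrak{s}\otimes S)\oplus(\mathfrak{s}\otimes\mathfrak{J})\oplus(\mathfrak{r}\otimes A)$. (The notation paragraph in the introduction is admittedly misleading on this point.) Under your reading the statement is in fact false: for $\mathfrak{g}=\mathfrak{h}_m$ one has $\mathfrak{z}(\mathfrak{g})\subseteq[\mathfrak{g},\mathfrak{g}]$ and the Levi factor of $\mathfrak{g}$ is zero, yet the paper's own computation for the truncated Heisenberg algebras exhibits $\mathfrak{sp}_{2m}(\mathbb{C})$ as the Levi factor of $\der(\mathfrak{h}_{m,k})$. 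Correspondingly your five pieces do not exhaust $\der(\mathfrak{g}_A)$ (the outer derivations of $\mathfrak{g}$ tensored with $A$ are unaccounted for), and the suggestion that the hypothesis $\mathfrak{z}(\mathfrak{g})\subseteq[\mathfrak{g},\mathfrak{g}]$ ``absorbs'' the kernel of $\mathrm{ad}$, or pushes the non-inner semisimple derivations into $\Hom(\mathfrak{g}/[\mathfrak{g},\mathfrak{g}],\mathfrak{z}(\mathfrak{g}))\otimes\End(A)$, cannot be made to work: in the Heisenberg example those derivations genuinely enlarge the Levi factor and cannot be forced into the radical.

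The maximality step, which you single out as the crux, is left as an intention (``I would argue that\dots'') and is never carried out; but under the correct reading it is the easy part. Once one knows that (a) $\widetilde{\mathfrak{r}}$ is a solvable ideal, (b) $\mathfrak{s}\otimes S\cong\mathfrak{s}\oplus\dots\oplus\mathfrak{s}$ is a semisimple subalgebra (your argument here is fine and is the paper's), and (c) $\der(\mathfrak{g}_A)=(\mathfrak{s}\otimes S)\oplus\widetilde{\mathfrak{r}}$ as vector spaces, maximality is automatic: any solvable ideal projects to a solvable ideal of the semisimple quotient $\der(\mathfrak{g}_A)/\widetilde{\mathfrak{r}}\cong\mathfrak{s}\otimes S$, hence lies in $\widetilde{\mathfrak{r}}$. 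The real work, which your sketch only gestures at, is (a): one needs the corollary that $D(A)\subseteq\mathfrak{J}$ for every $D\in\der(A)$ to see that the cross-bracket of $\der(\mathfrak{g})\otimes A$ with $\Hom_{\mathfrak{g}}(\mathfrak{g},\mathfrak{g})\otimes\der(A)$ lands in $(\der(\mathfrak{g})\otimes\mathfrak{J})+W$ rather than back in all of $\der(\mathfrak{g})\otimes A$; this is what makes $\widetilde{\mathfrak{r}}$ an ideal avoiding $\mathfrak{s}\otimes S$ and, together with the nilpotency of $\mathfrak{J}$, the solvability of $\der(A)$ and the abelianity of $\mathfrak{k}$ (the latter being the actual use of $\mathfrak{z}(\mathfrak{g})\subseteq[\mathfrak{g},\mathfrak{g}]$, which you did identify correctly), terminates the derived series.
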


This results enable us to describe the Levi decomposition of all the truncated current Heisenberg Lie algebras as is illustrated in the following example.

\vspace{3pt}
\begin{exampleSN}
Let $\mathfrak{h}_{1,1}:= \mathfrak{h}_1 \otimes \mathbb{C}[t]/ (t^2)$ be the
truncated current Heisenberg Lie algebra of dimension $6$, with basis
\begin{eqnarray}
\mathcal{B}_{1,1} &=& \left\{e \otimes 1, e \otimes t, f \otimes 1, f \otimes t, z \otimes 1, z \otimes t\right\}\nonumber
\end{eqnarray}
such that
\begin{eqnarray}
[e \otimes t^i, f \otimes t^j] &=&
\begin{cases}
z \otimes t^{i+j}, &\text{ if } i+j \leq 1; \\
0,  & \text{ otherwise},
\end{cases} \nonumber
\end{eqnarray}
and the other brackets vanishes identically, $[e \otimes t^i, z \otimes t^j]= [f \otimes t^i, z \otimes t^j]= 0$ for $i,j= 0, 1$. Then, for every derivation $\textbf{D} \in \der(\mathfrak{h}_{1,1})$, we have
$$
\setlength{\unitlength}{7mm}
\begin{picture}(15,7)(0,-1)
\linethickness{0.3mm}
\put(0.35,0){\line(0,1){5}}
\put(11.75,0){\line(0,1){5}}
\put(15.15,0){\line(0,1){5}}
\put(0.35,0){\line(1,0){.2}}
\put(0.35,5){\line(1,0){.2}}
\put(15.15,5){\line(-1,0){.2}}
\put(15.15,0){\line(-1,0){.2}}
\linethickness{0.1mm}
\multiput(5.5,1.25)(0,.2){20}{\line(0,1){.1}}
\put(0.35,1.25){\line(1,0){14.85}}
\multiput(0.35,3)(.2,0){57}{\line(1,0){.1}}
\scriptsize{
\put(0.5,4.4){$\small{a_{11} + d_{11}}$}
\put(3.5,4.4){$0$}
\put(0.5,3.4){$\small{a_{21} + d_{21}}$}
\put(2.5,3.4){$\small{a_{11} + d_{11} + d_{22}}$}

\put(9.25,3.4){$\small{b_{11}}$}
\put(9.25,4.5){$0$}
\put(6.5,4.5){$\small{b_{11}}$}
\put(6.5,3.4){$\small{b_{21}}$}

\put(1,2.25){$\small{c_{11}}$}
\put(3.5,2.25){$0$}
\put(1,1.5){$\small{c_{21}}$}
\put(3.5,1.5){$\small{c_{11}}$}

\put(1,0.75){$\small{x_{11}}$}
\put(1,0.25){$\small{x_{21}}$}
\put(3.5,0.75){$\small{x_{12}}$}
\put(3.5,0.25){$\small{x_{22}}$}
\put(6.5,0.75){$\small{x_{13}}$}
\put(6.5,0.25){$\small{x_{23}}$}
\put(9.25,0.75){$\small{x_{14}}$}
\put(9.25,0.25){$\small{x_{24}}$}

\put(11.8,0.75){$\small{2d_{11}}$}
\put(11.8,0.25){$\small{2d_{21}}$}
\put(13.75,0.75){$\small{0}$}
\put(13.15,0.25){$\small{2d_{11}+d_{22}}$}

\put(13.5,2.95){$\small{0}$}

\put(8.25,1.5){$\small{-a_{11} + d_{11}+ d_{22}}$}
\put(9.25,2.25){$0$}
\put(5.75,1.5){$\small{-a_{21} + d_{21}}$}
\put(5.75,2.25){$\small{-a_{11} + d_{11}}$}
\put(-1.55,2.5){$[\textbf{D}]_{\mathcal{B}_{1,1}}=$}
}
\end{picture}
$$
and the Levi factor could be identified with $\mathfrak{sl}_2(\mathbb{C})$, or what is the same $\mathfrak{sp}_{1}(\mathbb{C})$, as we will see in section \S\ref{current}.
\end{exampleSN}

The paper is organized as follows. In section \S\ref{preli} we make a carefully analysis of the Lie bracket of the derivations Lie algebra
$\der(\mathfrak{g}_A)$, through the decomposition given by Zusmanovich in \cite{Z}, arriving to Proposition \ref{vector_space_decomposition}. In section \S\ref{Levi} we will show several results concerning the behaviour of derivations algebra
$\der(A)$ defined over a finite dimensional, associative and commutative algebra
$A$ with unit  reducing it to the study of derivations defined over finite dimensional local algebras. In this section we show the main result of the paper Theorem \eqref{Teo:Principal}.

Finally, in \S\ref{current}  we obtain a faithful representation of the Lie algebra $\der(\mathfrak{h}_{m,k})$, with
$\mathfrak{h}_{m,k}$ the current Heisenberg Lie algebra of dimension
$(2m + 1)(k + 1)$ for all positive integer
$k$. This representation will enable us to identify the Levi decomposition of
$\mathfrak{h}_{m,k}$ and to show that its Levi factor is isomorphic to
$\mathfrak{sp}_m(\mathbb{C})$.


\section{Lie algebra of derivations of a current Lie algebra}\label{preli}


Let 
$\mathfrak{g}$ be a complex Lie algebra and let $A$ be an complex, associative and commutative algebra with unit. The tensor product
$\mathfrak{g}_{A}:= \mathfrak{g} \otimes A$ has a Lie algebra structure if we endow it with the Lie bracket
$$
[X_1 \otimes a_1, X_2 \otimes a_2]= [X_1, X_2] \otimes a_1 a_2,
$$
for every $X_i \in \mathfrak{g}$ and and $a_i \in A$.
This Lie algebra is known as the current Lie algebra associate with $\mathfrak{g} \text{ and } A$. In this section we will give a detailed description of the structure of
$\der(\mathfrak{g}_{A})$.

\begin{remark}
In order to fix some notation, if $V, W$ are $\mathfrak{g}$-modules,  we will write
\begin{enumerate}[(i)]
\item $\Hom_{\mathbb{C}}(V, W)$ the \emph{vector space} of all 
$\mathbb{C}$-linear maps $T: V \rightarrow W$, with standard $\mathfrak{g}$-module structure defined by
$$(x \cdot  T)(v):= T(x \cdot v) - x \cdot T(v),$$
for $x \in \mathfrak{g}$ and $v \in V$. In particular, if $V= W$ then we write $\End_{\mathbb{C}}(V)$ instead of $\Hom_{\mathbb{C}}(V, V)$.
\item $\Hom_{\mathfrak{g}}(V, W)$ will denote the vector space of $\mathfrak{g}$-linear maps, that is, the space of 
$\mathbb{C}$-linear maps $T: V \rightarrow W$ such that $T(x \cdot v)= x \cdot T(v)$ for all $x \in \mathfrak{g}$ and $v \in V$, in particular, $\Hom_{\mathfrak{g}}(V, W)$ is the submodule  of $\mathfrak{g}$-invariants of $\Hom_{\mathbb{C}}(V, W)$.

If we consider $\mathfrak{g}$ acting on itself by means of the adjoint action, the $\mathfrak{g}$-invariants of $\End_\mathbb{C}(\mathfrak{g})$ is the set
$$
\Hom_{\mathfrak{g}}(\mathfrak{g},\mathfrak{g})= \{T \in \End(\mathfrak{g}) : ad_x \circ T= T \circ ad_x \text{ for all } x \in \mathfrak{g} \},
$$
wich is also called the \emph{centroid} of $\mathfrak{g}$ (see for instance \cite{J1}).
\end{enumerate}
\end{remark}

\begin{remark}
Let $A$ be an algebra with unit, it is easy to see that $A$ can be identified, as an algebra, with the vector space
$$
\textbf{A}=\left\{f_a \in \End(A): a \in A \quad \text{and} \quad  f_a(b)= ab \right\},
$$
endowed with composition of maps as its product.
\end{remark}

The following lemma can be found in \cite[Theorem 2.1]{Z} and it gives a convenient decomposition, as vector space, of $\der(\mathfrak{g}_A)$.

\begin{lemma}\label{lema23}
Let
$A$ be  a finite dimensional complex, associative and commutative algebra with unit and let
$\mathfrak{g}$ be a finite dimensional complex Lie algebra. Then $\der(\mathfrak{g}_A)$ \emph{as a vector space} is isomorphic to
\begin{eqnarray}\label{eq:Z}
\der(\mathfrak{g}) \otimes A + \Hom_{\mathfrak{g}}(\mathfrak{g},\mathfrak{g}) \otimes \der(A) +
\Hom(\mathfrak{g}/ [\mathfrak{g},\mathfrak{g}], \mathfrak{z}(\mathfrak{g})) \otimes \operatorname{\End}(A).\nonumber
\end{eqnarray}
\end{lemma}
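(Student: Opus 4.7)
The plan is to verify that each summand embeds as derivations, and then decompose an arbitrary $\mathbf{D} \in \der(\mathfrak{g}_A)$. For the easy inclusion, I would check the Leibniz rule directly. For $D \in \der(\mathfrak{g})$ and $a \in A$, the extension $X \otimes b \mapsto D(X) \otimes ab$ is a derivation by the Leibniz rule for $D$ and commutativity of $A$. For $T \in \Hom_{\mathfrak{g}}(\mathfrak{g},\mathfrak{g})$ and $d \in \der(A)$, the map $X \otimes b \mapsto T(X) \otimes d(b)$ is a derivation by combining the centroid identity with the Leibniz rule for $d$. For $\phi \in \Hom(\mathfrak{g}/[\mathfrak{g},\mathfrak{g}], \mathfrak{z}(\mathfrak{g}))$, lifted by zero on $[\mathfrak{g},\mathfrak{g}]$ to $\tilde\phi : \mathfrak{g} \to \mathfrak{z}(\mathfrak{g})$, and any $E \in \End(A)$, the extension $X \otimes b \mapsto \tilde\phi(X) \otimes E(b)$ is a derivation trivially, since $\tilde\phi([X,Y]) = 0$ and $[\tilde\phi(X), Y] = 0$.

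For the reverse inclusion, fix a basis $\{a_1 = 1, a_2, \ldots, a_n\}$ of $A$ and write $\mathbf{D}(X \otimes a_i) = \sum_j D_{ij}(X) \otimes a_j$ with $D_{ij} \in \End_{\mathbb{C}}(\mathfrak{g})$. Applying the Leibniz identity with $a = b = 1$ shows $D_{1j} \in \der(\mathfrak{g})$ for each $j$, so the element $\sum_j D_{1j} \otimes f_{a_j} \in \der(\mathfrak{g}) \otimes A$ reproduces the restriction of $\mathbf{D}$ to $\mathfrak{g} \otimes 1$. Subtracting it off, I may assume the resulting derivation $\mathbf{D}'$ annihilates $\mathfrak{g} \otimes 1$. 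Applying the Leibniz identity to $\mathbf{D}'$ with $b = 1$ and, symmetrically, with $a = 1$, then forces $D'_{ij}([X,Y]) = [D'_{ij}(X), Y] = [X, D'_{ij}(Y)]$, so each $D'_{ij}$ lies in the centroid $\Gamma := \Hom_{\mathfrak{g}}(\mathfrak{g},\mathfrak{g})$. Under $\End(\mathfrak{g}_A) \cong \End(\mathfrak{g}) \otimes \End(A)$ this places $\mathbf{D}' \in \Gamma \otimes \End(A)$.

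For the last step I would split $\Gamma = \Gamma_0 \oplus \Gamma_1$ with $\Gamma_0 := \{T \in \Gamma : T|_{[\mathfrak{g},\mathfrak{g}]} = 0\}$. Any such $T$ satisfies $[T(X), Y] = T([X,Y]) = 0$ for all $X, Y$, forcing $T(\mathfrak{g}) \subset \mathfrak{z}(\mathfrak{g})$; conversely any $T : \mathfrak{g} \to \mathfrak{z}(\mathfrak{g})$ vanishing on $[\mathfrak{g},\mathfrak{g}]$ is automatically in $\Gamma$. Hence $\Gamma_0 = \Hom(\mathfrak{g}/[\mathfrak{g},\mathfrak{g}], \mathfrak{z}(\mathfrak{g}))$ is exactly the third summand. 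Writing $\mathbf{D}' = \mathbf{D}'_0 + \mathbf{D}'_1$ along this splitting, the part $\mathbf{D}'_0 \in \Gamma_0 \otimes \End(A)$ is absorbed into the third summand (and is a derivation automatically, since both sides of the Leibniz identity vanish on it). For $\mathbf{D}'_1 = \sum_\beta T_\beta \otimes E_\beta$ with $\{T_\beta\} \subset \Gamma_1$ linearly independent, the Leibniz identity collapses via the centroid property to
\[
\sum_\beta T_\beta([X,Y]) \otimes \bigl(E_\beta(ab) - E_\beta(a) b - a E_\beta(b)\bigr) = 0
\]
for all $X, Y \in \mathfrak{g}$ and $a, b \in A$. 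Since $\Gamma_1 \cap \Gamma_0 = 0$, the restrictions $\{T_\beta|_{[\mathfrak{g},\mathfrak{g}]}\}$ remain linearly independent in $\Hom([\mathfrak{g},\mathfrak{g}], \mathfrak{g})$; the standard tensor-independence argument then gives $E_\beta(ab) = E_\beta(a) b + a E_\beta(b)$, i.e., $E_\beta \in \der(A)$, so $\mathbf{D}'_1 \in \Gamma \otimes \der(A)$ and the decomposition is complete.

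The main obstacle will be this final splitting: isolating the fact that the Leibniz defect on the $\End(A)$-side is absorbed exactly by the central piece $\Gamma_0 \otimes \End(A)$. It rests on the two equivalent descriptions of $\Gamma_0$ --- as centroid elements killing brackets and as homomorphisms from the abelianization into the center --- together with the tensor-rank argument forcing the surviving $E_\beta$ to be genuine derivations of $A$. The remaining steps are routine manipulations of the Leibniz identity and basis expansions.
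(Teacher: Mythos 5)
Your argument is correct. Note, however, that the paper offers no proof of this lemma at all: it is quoted verbatim from Zusmanovich \cite[Thm.~2.1]{Z}, so what you have produced is a self-contained reconstruction rather than an alternative to an argument in the text. Your route is sound at every step: the three summands are derivations by direct Leibniz checks (commutativity of $A$ and centrality of the image being the key points for the first and third pieces); the reduction of an arbitrary $\mathbf{D}$ uses the unit of $A$ exactly where it must, first to see that the $D_{1j}$ are derivations of $\mathfrak{g}$ and then, after subtracting $\sum_j D_{1j}\otimes f_{a_j}$, to force every $D'_{ij}$ into the centroid $\Gamma=\Hom_{\mathfrak{g}}(\mathfrak{g},\mathfrak{g})$; and the final splitting $\Gamma=\Gamma_0\oplus\Gamma_1$, with $\Gamma_0=\Hom(\mathfrak{g}/[\mathfrak{g},\mathfrak{g}],\mathfrak{z}(\mathfrak{g}))$ identified as the kernel of restriction to $[\mathfrak{g},\mathfrak{g}]$, is precisely what makes the tensor-independence argument legitimate (the restrictions $T_\beta|_{[\mathfrak{g},\mathfrak{g}]}$ stay independent, so the Leibniz defect of each $E_\beta$ vanishes). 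This is essentially the mechanism of Zusmanovich's original proof. What your write-up buys the reader is transparency about two points the citation hides: that the sum need not be direct (the overlaps live in $\Gamma_0\otimes\End(A)$ and in $\der(\mathfrak{g})\cap\Gamma$), and that finite-dimensionality is used only to identify $\End(\mathfrak{g}_A)$ with $\End(\mathfrak{g})\otimes\End(A)$ and to run the rank argument. One small polish worth adding: state explicitly that $[\mathfrak{g},\mathfrak{g}]$ is spanned by brackets, so the displayed identity holds for all $w\in[\mathfrak{g},\mathfrak{g}]$ and not just for single commutators, before invoking the independence of the restricted $T_\beta$.
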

The following proposition shows the behavior of the Lie bracket in $\der(\mathfrak{g}_A)$ with respect to the decomposition \eqref{eq:Z}.

\begin{proposition}\label{vector_space_decomposition}
Let
$A$ and
$\mathfrak{g}$ be as in Lemma \ref{eq:Z}.
Let $\mathfrak{h}$, $W$ and $\mathfrak{k}$ denote
$\der(\mathfrak{g}) \otimes A$,
$\Hom_{\mathfrak{g}}(\mathfrak{g}, \mathfrak{g}) \otimes \der(A)$ and  $\Hom(\mathfrak{g}/ [\mathfrak{g},\mathfrak{g}], \mathfrak{z}(\mathfrak{g})) \otimes \End(A)$, respectively, then
\begin{enumerate}[(a)]
\item $\mathfrak{h}$ is a Lie subalgebra of $\der(\mathfrak{g}_A)$ and
\item $\mathfrak{k}$ is an ideal in
     $\der(\mathfrak{g}_A)$.
\end{enumerate}
Moreover, the Lie bracket in
$\der(\mathfrak{g}_A)$ is governed by the rules indicated in Table \eqref{Tabla:1}.
\begin{table}[!h]
\begin{centering}
$$
\begin{tabular}{||l|c|c||c||}
\hline
\hline
\vbox to .5cm{}
 &\tiny{$\mathfrak{h}$} & \tiny{$W$} & \tiny{$\mathfrak{k}$}\\
\hline
& & & \\
\tiny{$\mathfrak{h}$} &\tiny{$[D_1, D_2] \otimes a_1 a_2$} & \tiny{$\underbrace{[D, T] \otimes (a \rho)}_{\in \; W} - \underbrace{(T \circ D) \otimes \rho(a)}_{\in \; \mathfrak{h}}$} & \tiny{$\underbrace{[D, T] \otimes (a f) - (T \circ D) \otimes a \cdot f}_{\in \;\mathfrak{k}}$}\\
\hline
     &    & &\\
\tiny{$W$} &  & \tiny{$\underbrace{[T_1, T_2] \otimes (\rho_1 \circ \rho_2)}_{\in \; \mathfrak{k}} + \underbrace{(T_2 \circ T_1) \otimes [\rho_1, \rho_2]}_{\in \; W}$} & \tiny{$\underbrace{(T_1 \circ T_2) \otimes (\rho \circ f) - (T_2 \circ T_1) \otimes (f \circ \rho)}_{\in \; \mathfrak{k}}$}\\
\hline
     &    & &\\
\tiny{$\mathfrak{k}$} &  &  & \tiny{$\underbrace{(T_1 \circ T_2) \otimes (f_1 \circ f_2) - (T_2 \circ T_1) \otimes (f_2 \circ f_1)}_{\in \; \mathfrak{k}}$} \\
\hline
\end{tabular}
$$
\caption{Brackets of \eqref{eq:Z}.}
\label{Tabla:1}
\end{centering}
\end{table}
\end{proposition}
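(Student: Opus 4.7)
The plan is to realise each of the three summands explicitly as derivations of $\mathfrak{g}_A$, compute the six brackets on a generic element $X\otimes b$, and then verify that the resulting tensors genuinely lie in the subspaces indicated by the table. Concretely, I would make $D\otimes a\in\mathfrak{h}$ act by $X\otimes b\mapsto D(X)\otimes ab$, $T\otimes\rho\in W$ act by $X\otimes b\mapsto T(X)\otimes\rho(b)$, and $T\otimes f\in\mathfrak{k}$ act by $X\otimes b\mapsto T(\overline{X})\otimes f(b)$, where $\overline{X}$ is the image of $X$ in $\mathfrak{g}/[\mathfrak{g},\mathfrak{g}]$. Each of these is a derivation of $\mathfrak{g}_A$ by a short check using commutativity of $A$, the Leibniz rule for $\rho\in\der(A)$, the centroid identity $T[X,Y]=[TX,Y]=[X,TY]$, and centrality of $T(\overline{X})$ in the third case.

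With these formulas in hand each of the six brackets becomes a one-line calculation. For example, for $(D\otimes a,T\otimes\rho)\in\mathfrak{h}\times W$ I would expand
\[
\bigl((D\otimes a)\circ(T\otimes\rho)-(T\otimes\rho)\circ(D\otimes a)\bigr)(X\otimes b)=DT(X)\otimes a\rho(b)-TD(X)\otimes\rho(ab),
\]
split $\rho(ab)=\rho(a)b+a\rho(b)$, and regroup the result as $[D,T](X)\otimes a\rho(b)-(T\circ D)(X)\otimes\rho(a)b$. The remaining entries are analogous; the only algebraic inputs are the derivation properties of $\rho$ and $D$, the commutativity of $A$, and the two-sided centroid relation.

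It then remains to check that each tensor belongs to the claimed subspace. For the entries involving only $\mathfrak{h}$ and $W$, I would rely on three standard closure facts: $\der(\mathfrak{g})$ is closed under commutators; composition of a centroid element with a derivation is a derivation, while the commutator of a derivation with a centroid element is again a centroid element; and $\rho_1\circ\rho_2\in\End(A)$ whereas $[\rho_1,\rho_2]\in\der(A)$. For the entries landing in $\mathfrak{k}$, I would use in addition that any $D\in\der(\mathfrak{g})$ preserves both $[\mathfrak{g},\mathfrak{g}]$ and $\mathfrak{z}(\mathfrak{g})$, so for $T\in\Hom(\mathfrak{g}/[\mathfrak{g},\mathfrak{g}],\mathfrak{z}(\mathfrak{g}))$ both $T\circ D$ and $[D,T]$ still factor through $\mathfrak{g}/[\mathfrak{g},\mathfrak{g}]$ into the centre.

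The one genuinely delicate point, and the main obstacle, is the first entry of $[W,W]$: I must show that $[T_1,T_2]\otimes(\rho_1\circ\rho_2)$ lies in $\mathfrak{k}$, that is, that $[T_1,T_2]\in\Hom(\mathfrak{g}/[\mathfrak{g},\mathfrak{g}],\mathfrak{z}(\mathfrak{g}))$ for arbitrary centroid elements $T_1,T_2$. My plan here is to apply the centroid identity twice, once for each $T_i$ and in both possible orders, yielding
\[
T_1T_2[X,Y]=[T_1X,T_2Y]=[T_2X,T_1Y]=T_2T_1[X,Y].
\]
This forces $[T_1,T_2]\big|_{[\mathfrak{g},\mathfrak{g}]}=0$; since $[T_1,T_2]$ itself lies in the centroid, the relation $[[T_1,T_2]X,Y]=[T_1,T_2][X,Y]=0$ then gives $[T_1,T_2](\mathfrak{g})\subset\mathfrak{z}(\mathfrak{g})$, as required.
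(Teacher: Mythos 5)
Your proposal is correct and follows essentially the same route as the paper's proof: realise each summand of the Zusmanovich decomposition as explicit operators on $\mathfrak{g}_A$, expand the six brackets on a generic $X\otimes b$, and verify membership using the closure properties of $\der(\mathfrak{g})$, the centroid, and $\der(A)$. You are in fact more careful than the paper at the one genuinely nontrivial membership check --- that $[T_1,T_2]$ annihilates $[\mathfrak{g},\mathfrak{g}]$ and maps into $\mathfrak{z}(\mathfrak{g})$ for centroid elements $T_1,T_2$ --- which the paper dismisses as ``easy to see''; your double application of the centroid identity is the right way to see it.
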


\begin{proof}
We first compute the brackets of the decomposition \eqref{eq:Z}.
\begin{enumerate}[(1)]
\item  Let
$D_1, D_2 \in \der(A)$ and let
$a_1, a_2 \in A$. Therefore
\begin{eqnarray}
\nonumber [D_1 \otimes a_1, D_2 \otimes a_2](x \otimes b)  &=&((D_1 \otimes a_1)(D_2 \otimes a_2)\\
\nonumber                                                  & &-(D_2 \otimes a_2)(D_1 \otimes a_1))(x \otimes b) \\
\nonumber                                                  &=&D_1( D_2 (x)) \otimes a_1 (a_2 b) - D_2( D_1 (x)) \otimes a_2 (a_1 b) \\
\nonumber                                                  &=&(D_1( D_2 (x)) - D_2( D_1 (x)) )\otimes (a_1 a_2) b\\
\label{eq:[W1]}                                           &=& ([D_1, D_2] \otimes a_1 a_2) (x \otimes b).
\end{eqnarray}
for every
$x \in \mathfrak{g}$ and $b \in A$.
It follows that
$\mathfrak{h}$ is a Lie subalgebra of
$\der(\mathfrak{g}_A)$.

\medskip

\item Let
      $D  \otimes a \in \mathfrak{h}$ and $T \otimes \rho \in W$. We claim that
      \begin{eqnarray}\label{eq:[W1W2]}
     \nonumber [D \otimes a, T \otimes \rho] &=& [D, T] \otimes (a \rho) - (T \circ D) \otimes \rho(a) \in \mathfrak{h} \oplus W. 
      \end{eqnarray}
      Indeed,
      \begin{eqnarray}
       \nonumber[D \otimes a, T \otimes \rho](x \otimes b)&=& [D \otimes a, T \otimes \rho](x \otimes b)\\
        \nonumber                                         &=& D( T(x)) \otimes a \rho(b) -  T( D(x)) \otimes  \rho(ab)\\
         \nonumber                                        &=& D( T(x)) \otimes a \rho(b) -  T( D(x)) \otimes  \rho(ab)  \\
          \nonumber                                       & & + T( D(x)) \otimes a \rho(b) - T( D(x)) \otimes a \rho(b)  \\
      \label{eq:TW1W2}                                     &=& \left([D, T] \otimes (a \rho) - (T \circ D) \otimes \rho(a)\right)(x \otimes b).
      \end{eqnarray}
      for every $x \in \mathfrak{g}$ and $b \in A$.

      Since
      $A$ is an commutative algebra, we have
      $a \rho \in \der(A)$ and by straightforward calculation we obtain
      $[D, T] \in \Hom_{\mathfrak{g}}(\mathfrak{g}, \mathfrak{g})$ and
      $T \circ D \in \der(\mathfrak{g})$.
      Then, from \eqref{eq:TW1W2}, it follows that
      \begin{eqnarray}
      [D \otimes a, T \otimes \rho] &=& \underbrace{[D, T] \otimes (a \rho)}_{\in W} - \underbrace{(T \circ D) \otimes \rho(a)}_{\in \mathfrak{h}}.\label{eq:TW1W2F}
      \end{eqnarray}

\item Let
      $D \otimes a \in \mathfrak{h}$ and let
      $T \otimes f \in \mathfrak{k}$.  We claim that
      \begin{eqnarray}
      [D \otimes a, T \otimes f] &=& [D, T] \otimes (a f) - (T \circ D) \otimes a \cdot f \in \mathfrak{k}. \label{eq:TW1W3}
      \end{eqnarray}
      Indeed,
\begin{eqnarray}
[D \otimes a, T \otimes f](x \otimes b)&=& \left((D \otimes a)(T \otimes f)-(T \otimes f)(D \otimes a)\right)(x \otimes b) \nonumber\\
& =& D( T(x)) \otimes a f(b) -  T( D(x)) \otimes  f(ab) \nonumber\\
& =& \left([D, T] \otimes (a f) - (T \circ D) \otimes a \cdot f\right)(x \otimes b),\nonumber
\end{eqnarray}
for every
$x \in \mathfrak{g}, b \in A$.

Since
      $D \in \der(\mathfrak{g})$, we obtain
      $D\left([\mathfrak{g}, \mathfrak{g}]\right) \subseteq [\mathfrak{g}, \mathfrak{g}]$ and
      $D(\mathfrak{z}(\mathfrak{g})) \subseteq \mathfrak{z}(\mathfrak{g})$.
      It is follows that
      $[D, T], T \circ D \in \Hom\left(\mathfrak{g}/ [\mathfrak{g},\mathfrak{g}] , \mathfrak{z}(\mathfrak{g})\right)$. Thus
     \begin{eqnarray}
      [D \otimes a, T \otimes f] &\in& \mathfrak{k}.\nonumber
      \end{eqnarray}
 \item Let
      $T_i \otimes \rho_i \in W$ for
      $i= 1, 2$. It is clear that
      \begin{eqnarray}\label{eq:TW2}
      [T_1 \otimes \rho_1, T_2 \otimes \rho_2] &=& \underbrace{[T_1, T_2] \otimes (\circ \rho_2)}_{\mathfrak{k}} + \underbrace{(T_2 \circ T_1) \otimes [\rho_1, \rho_2]}_{W}.
      \end{eqnarray}

Since $\rho_1, \rho_2 \in \der(A)$ we have $[\rho_1, \rho_2] \in \der(\mathfrak{g})$. It is also easy to see that $T_2 \circ T_1 \in \Hom_{\mathfrak{g}}(\mathfrak{g},\mathfrak{g})$ and that $[T_1, T_2] \in \Hom\left(\mathfrak{g}/ [\mathfrak{g}, \mathfrak{g}], \mathfrak{z}(\mathfrak{g})\right)$.  Therefore
      \begin{eqnarray}
      (T_2 \circ T_1) \otimes [\rho_1, \rho_2] \in W &\text{ and }& [T_1, T_2] \otimes (\rho_1 \circ \rho_2) \in  \mathfrak{k}. \nonumber
      \end{eqnarray}

\item Let $T_i \otimes f_i \in \mathfrak{k}$ for $i= 1, 2$, and $T \otimes \rho \in W$. It is not difficult to prove that $[T_1 \otimes f_1, T_2 \otimes f_2]$ and $[T \otimes \rho, T_1 \otimes f_1]$ are elements of $\mathfrak{k}$.
\end{enumerate}
The proof is now complete.
\end{proof}


\section{The Levi Decomposition of the Lie algebra of derivations of a current Lie algebra}\label{Levi}


Since we are interested in current Lie algebras that has been built from finite dimensional commutative algebras, then it is important to know their structure with the aim to get some information about their derivation algebra
$\der(A)$.

\begin{proposition}\cite[Prop. 8.6 and Thm. 8.7]{AM}\label{structure-finite-algebras}
Let
$A$ be a finite dimensional, associative and commutative algebra with unit over a field
$\mathbb{K}$. Then
$A$ is isomorphic to a direct sum of local algebras, each of these ones satisfying that its only maximal ideal is nilpotent.
\end{proposition}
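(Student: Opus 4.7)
The plan is to build the decomposition from the Artinian structure of $A$. Since $A$ is finite dimensional over $\mathbb{K}$, any descending chain of ideals stabilizes by dimension count, so $A$ is an Artinian commutative ring with unit. I would first recall two standard Artinian facts: (i) $A$ has only finitely many maximal ideals $\mathfrak{m}_1, \ldots, \mathfrak{m}_n$, proved by noting that a strictly descending chain of finite intersections of pairwise distinct maximal ideals cannot exist, and (ii) the Jacobson radical $\mathfrak{J} = \bigcap_{i=1}^{n} \mathfrak{m}_i$ is nilpotent, say $\mathfrak{J}^N = 0$, which follows from $\mathfrak{J}^k = \mathfrak{J}^{k+1}$ for some $k$ (by DCC) together with a Nakayama-type argument showing this common value must vanish.

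Next I would apply the Chinese Remainder Theorem to the pairwise comaximal ideals $\mathfrak{m}_1, \ldots, \mathfrak{m}_n$, obtaining a ring isomorphism
\begin{equation*}
A/\mathfrak{J} \;\cong\; A/\mathfrak{m}_1 \,\times\, \cdots \,\times\, A/\mathfrak{m}_n,
\end{equation*}
whose right-hand side carries obvious primitive orthogonal idempotents $\bar{e}_1, \ldots, \bar{e}_n$ summing to $1$. The core step is then lifting these idempotents through the nilpotent ideal $\mathfrak{J}$. Given any lift $a \in A$ of $\bar{e}_i$, the classical iteration $a \mapsto 3a^2 - 2a^3$ improves the defect $a^2 - a$ quadratically modulo $\mathfrak{J}^{2k}$; after finitely many rounds the defect lies in $\mathfrak{J}^N = 0$, yielding a genuine idempotent $e_i \in A$ congruent to $\bar{e}_i$ modulo $\mathfrak{J}$. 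Orthogonality of the $e_i$ and the relation $\sum e_i = 1$ can be arranged by lifting them successively and replacing each new candidate by $(1 - e_1 - \cdots - e_{i-1})\, e_i\, (1 - e_1 - \cdots - e_{i-1})$, using commutativity and the nilpotence of $\mathfrak{J}$ to check the resulting elements are still idempotent.

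Once the orthogonal idempotents are in hand, one obtains the ring decomposition $A = A e_1 \oplus \cdots \oplus A e_n$, in which each summand $A_i := A e_i$ is a unital commutative ring with unit $e_i$. Its maximal ideals correspond to those of $A$ not killed by $e_i$, and there is exactly one, namely $\mathfrak{m}_i e_i$; since $\mathfrak{m}_i e_i \subseteq \mathfrak{J} e_i$ and $\mathfrak{J}^N = 0$, this unique maximal ideal is nilpotent, so each $A_i$ is local as required. The main obstacle is the simultaneous idempotent lifting: individual lifts are routine, but producing a full orthogonal system that sums to $1$ requires carefully exploiting the nilpotence of $\mathfrak{J}$ at each step, and this is precisely the content that specializes the general structure theorem for semiperfect rings to the much cleaner Artinian commutative setting.
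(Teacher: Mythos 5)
Your argument is correct, but it is worth noting that the paper offers no proof of this proposition at all: it is quoted directly from Atiyah--Macdonald \cite[Prop.~8.6 and Thm.~8.7]{AM}, so the relevant comparison is with the proof given there. The route in \cite{AM} is shorter than yours: after establishing, exactly as you do, that $A$ is Artinian with finitely many maximal ideals $\mathfrak{m}_1,\dots,\mathfrak{m}_n$ and nilpotent Jacobson radical $\mathfrak{J}^N=0$, one observes that the ideals $\mathfrak{m}_1^N,\dots,\mathfrak{m}_n^N$ are pairwise comaximal (since $\mathfrak{m}_i+\mathfrak{m}_j=A$ forces $\mathfrak{m}_i^N+\mathfrak{m}_j^N=A$) and that their intersection equals their product, which is contained in $\mathfrak{J}^N=0$; the Chinese Remainder Theorem then gives $A\cong\prod_i A/\mathfrak{m}_i^N$ directly, with each factor visibly local with nilpotent maximal ideal. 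This bypasses the entire idempotent-lifting machinery, which is the delicate part of your write-up (and where you are slightly informal: after orthogonalizing, one still needs the remark that $1-\sum_i e_i$ is an idempotent lying in $\mathfrak{J}$, hence nilpotent, hence zero, to get $\sum_i e_i=1$ rather than merely ``arranging'' it). What your approach buys in exchange is generality: lifting a complete orthogonal system of idempotents modulo a nil ideal is precisely the argument that proves the block decomposition of semiperfect and, in particular, noncommutative Artinian rings, whereas the CRT argument is special to the commutative setting. For the purposes of this paper either proof suffices, and your verification that each $A e_i$ has the unique maximal ideal $\mathfrak{m}_i e_i\subseteq\mathfrak{J}$, hence nilpotent, is sound.
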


\begin{lemma}\label{derivation-sum}
Let
$A$ be a finite dimensional, associative and commutative algebra with unit over a field
$\mathbb{K}$. If
$A$ is a direct sums of algebras
$A_1, \dots, A_r$ then
\begin{eqnarray}
\der(A) &\simeq& \der(A_1) \oplus \dots \oplus \der(A_r). \nonumber
\end{eqnarray}
\end{lemma}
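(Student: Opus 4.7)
The plan is to exploit the orthogonal idempotents $e_1,\dots,e_r \in A$ associated with the direct sum decomposition, where $e_i$ is the unit of $A_i$ and satisfies $e_i e_j = \delta_{ij} e_i$ and $e_1+\cdots+e_r = 1_A$. The two technical tasks are (i) to show that any $D \in \der(A)$ respects the decomposition in the sense that $D(A_i)\subseteq A_i$, and (ii) to verify that the resulting correspondence is a Lie algebra isomorphism.

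For (i), the crucial observation is that $D$ must annihilate each idempotent $e_i$. Applying $D$ to $e_i = e_i^2$ using the Leibniz rule and commutativity gives
\begin{equation*}
D(e_i) \,=\, D(e_i^2) \,=\, 2\, e_i D(e_i).
\end{equation*}
Multiplying this identity on the left by $e_i$ yields $e_i D(e_i) = 2 e_i^2 D(e_i) = 2 e_i D(e_i)$, so $e_i D(e_i)=0$, and substituting back gives $D(e_i)=0$. Then, for any $a\in A_i$, the identity $a = e_i a$ combined with the Leibniz rule produces
\begin{equation*}
D(a) \,=\, D(e_i)\, a + e_i D(a) \,=\, e_i D(a) \in A_i,
\end{equation*}
since $e_i A = A_i$. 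Hence the restriction $D_i := D|_{A_i}$ lies in $\der(A_i)$.

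For (ii), I would define $\Phi : \der(A) \to \bigoplus_{i=1}^r \der(A_i)$ by $\Phi(D) = (D_1,\dots,D_r)$, and its candidate inverse $\Psi$ by sending $(D_1,\dots,D_r)$ to the map $D$ on $A = \bigoplus_i A_i$ that acts on the $i$-th summand as $D_i$ and annihilates the others. A short check using the orthogonality of the $e_i$ confirms that $\Psi(D_1,\dots,D_r)$ is indeed a derivation of $A$: for $a=\sum a_i$, $b=\sum b_i$ with $a_i,b_i\in A_i$, one has $ab = \sum_i a_i b_i$, and the Leibniz rule on $A$ reduces componentwise to the Leibniz rule on each $A_i$. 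It is then immediate that $\Phi$ and $\Psi$ are mutually inverse linear maps, and since both the bracket in $\der(A)$ (the commutator of endomorphisms) and the bracket in the direct sum respect the decomposition, $\Phi$ is a Lie algebra isomorphism.

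The only point requiring care is the verification that $D(e_i)=0$; once that is in hand, everything else is formal manipulation with the idempotents, so I do not expect any substantial obstacle in carrying out the proof.
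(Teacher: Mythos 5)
Your proof is correct and follows essentially the same route as the paper, which simply asserts that the lemma ``follows directly from the decomposition of $A$ and the Leibniz rule''; your idempotent computation showing $D(e_i)=0$ and hence $D(A_i)\subseteq A_i$ is the standard way to make that one-line argument precise. No gaps.
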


\begin{proof}
The proof of the lemma follows directly from the decomposition of $A$ and the Leibniz rule for derivations.
\end{proof}

\begin{remark}
By Proposition \ref{structure-finite-algebras} and
Lemma \ref{derivation-sum} the study of $\der(A)$ boils down to the study of $\der(A_i)$, where $A_i$ is a local algebra.
\end{remark}

\begin{proposition}\label{derlocal}
Let
$A$ be a finite dimensional, associative and commutative local algebra with unit over a field
$\mathbb{K}$ of characteristic zero. Let
$\mathfrak{m}$ be a maximal ideal of
$A$ then
$D(A) \subseteq \mathfrak{m}$ for all
$D \in \der(A)$.
\end{proposition}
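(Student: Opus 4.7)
The plan is to exploit the two structural consequences of Proposition~\ref{structure-finite-algebras}: the maximal ideal $\mathfrak{m}$ is nilpotent, and, by locality, every element of $A\setminus\mathfrak{m}$ is a unit. The preliminary observation $D(1)=D(1\cdot 1)=2\,D(1)$ gives $D(1)=0$ in any characteristic. I would then reduce the statement to proving $D(\mathfrak{m})\subseteq \mathfrak{m}$, and separately deal with a lift of the residue field $A/\mathfrak{m}$.

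For the main step, I would pick a nonzero $n\in\mathfrak{m}$ and let $k\geq 2$ be the minimal integer with $n^{k}=0$; such a $k$ exists because $\mathfrak{m}$ is nilpotent, and $k\geq 2$ because $n\neq 0$. An iterated application of Leibniz gives $k\,n^{k-1}D(n)=0$, and since $\operatorname{char}(\mathbb{K})=0$ this simplifies to $n^{k-1}D(n)=0$. If $D(n)$ were not in $\mathfrak{m}$ then by locality it would be a unit, and multiplication by its inverse would force $n^{k-1}=0$, contradicting minimality of $k$. Hence $D(\mathfrak{m})\subseteq\mathfrak{m}$.

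To upgrade this to $D(A)\subseteq\mathfrak{m}$, I would pass to the Wedderburn--Malcev decomposition $A=L\oplus\mathfrak{m}$, where $L\cong A/\mathfrak{m}$ is a finite field extension of $\mathbb{K}$, separable because $\operatorname{char}(\mathbb{K})=0$. For each $\ell\in L$ with minimal polynomial $p\in\mathbb{K}[x]$, differentiating the identity $p(\ell)=0$ yields $p'(\ell)D(\ell)=0$ in $A$; separability makes $p'(\ell)$ a nonzero element of the field $L$, hence a unit in $A$, so $D(\ell)=0$. Combined with $D(\mathfrak{m})\subseteq\mathfrak{m}$, this gives $D(A)\subseteq\mathfrak{m}$. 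In the case $\mathbb{K}=\mathbb{C}$ used later in the paper, $L=\mathbb{C}\cdot 1$ and this final step collapses to the already-established $D(1)=0$.

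The whole argument is essentially routine once the right structural facts are assembled; the only mildly substantive point, and the only place where the characteristic-zero hypothesis is genuinely used, is the minimal-$k$ manipulation in the middle paragraph.
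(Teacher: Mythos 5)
Your argument is correct, and its first half---showing $D(\mathfrak{m})\subseteq\mathfrak{m}$ by taking $x\in\mathfrak{m}$ with $x^{n}=0$, $x^{n-1}\neq 0$, applying Leibniz to get $n\,x^{n-1}D(x)=0$, dividing by $n$ (characteristic zero), and concluding that $D(x)$ cannot be a unit---is exactly the computation in the paper. Where you diverge is in the passage from $D(\mathfrak{m})\subseteq\mathfrak{m}$ to $D(A)\subseteq\mathfrak{m}$. The paper observes that $D$ descends to a derivation $\overline{D}$ of the residue field $A/\mathfrak{m}$, which is a finite hence separable extension of $\mathbb{K}$, and then cites the standard fact that a separable field extension admits no nonzero $\mathbb{K}$-derivations, so $\overline{D}=0$. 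You instead lift the residue field into $A$ via Wedderburn--Malcev, writing $A=L\oplus\mathfrak{m}$ with $L\cong A/\mathfrak{m}$, and kill $D$ on $L$ directly by differentiating the minimal polynomial $p(\ell)=0$ and using that $p'(\ell)$ is a unit; this is in effect a self-contained re-proof of the cited fact, at the cost of invoking the existence of a coefficient field (which the paper only needs later, for the statement of Theorem 1). One small caution about your lifting step: the decomposition $A=L\oplus\mathfrak{m}$ requires $L$ to be a genuine subalgebra of $A$, which Wedderburn--Malcev does supply here because $A/\mathfrak{m}$ is separable in characteristic zero, but the quotient argument of the paper avoids this existence question altogether. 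Both routes are valid; yours is more elementary in its dependencies, the paper's is shorter. You might also note that your final step proves the slightly stronger statement $D(L)=0$, whereas only $D(L)\subseteq\mathfrak{m}$ is needed.
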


\begin{proof}
Let
$D \in \der(A)$, we first prove that
\begin{align*}
D(\mathfrak{m}) &\subset \mathfrak{m}. \label{eq:max}
\end{align*}
Since
$A$ is a finite dimensional, associative and commutative local algebra with unit and
$\mathfrak{m}$ its maximal ideal, we obtain
$\mathfrak{m}$ is the nil-radical of
$A$. Hence every elements of
$\mathfrak{m}$ is nilpotent and every element of $A$ is unit or nilpotent
\cite[page 89]{AM}.

It follows that for every
$x \in \mathfrak{m}$ there exists
$n \in \mathbb{N}$ such that
\begin{align*}
x^n = 0  \text{ and } x^{n-1} \neq 0.
\end{align*}
Therefore
\begin{eqnarray}
0 &=& D(x^n) \nonumber\\
  &\stackrel{D \in \der(A)}{=}& n x^{n-1}D(x). \nonumber
\end{eqnarray}
Thus
$D(x)$ is not unit, hence
$D(x) \in \mathfrak{m}$. In this way,
$D$ induces a derivation
$\overline{D}$ in
$A/\mathfrak{m}$ over
$\mathbb{K}$.

On the other hand, since
$A$ is finite dimensional over a field
$\mathbb{K}$ and
$\mathfrak{m}$ is maximal ideal of
$A$, we obtain
$A/\mathfrak{m}$ is a finite field extension of
$\mathbb{K}$. Hence
$A/\mathfrak{m}$ is a separable extension of
$\mathbb{K}$ \cite[\S 7]{B2} 
and in consequence
$\der_{\mathbb{K}}(A/\mathfrak{m})= 0$ \cite[Thm. 25.3]{Ma}.

We conclude that
$\overline{D}= 0$, it follows that
$D(A) \subset \mathfrak{m}$.
\end{proof}

\begin{corollary}\label{remark Importante}
Let
$A$ be a finite dimensional, associative and commutative algebra with unit over a field
$\mathbb{K}$. If $\mathfrak{J}$ is the Jacobson radical of $A$ then $D(A) \subseteq \mathfrak{J}$ for all
$D \in \der(A)$.
\end{corollary}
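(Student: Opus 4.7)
The plan is to reduce the problem to the local case handled by Proposition \ref{derlocal}, using the structural results already established in this section. By Proposition \ref{structure-finite-algebras}, the algebra $A$ decomposes as a direct sum of finite dimensional local algebras $A_1, \dots, A_r$, each with a unique maximal ideal $\mathfrak{m}_i$ which is nilpotent. Recall that the Jacobson radical of a finite dimensional commutative algebra coincides with the intersection of all maximal ideals, and under the decomposition of $A$ as a product of local algebras, this becomes $\mathfrak{J} = \mathfrak{m}_1 \oplus \dots \oplus \mathfrak{m}_r$.

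Next, by Lemma \ref{derivation-sum}, every derivation $D \in \der(A)$ decomposes compatibly with the direct sum, i.e.\ $D = D_1 \oplus \dots \oplus D_r$ with $D_i \in \der(A_i)$. Proposition \ref{derlocal} applies to each local component, giving $D_i(A_i) \subseteq \mathfrak{m}_i$ for every $i$. Summing over the components yields
\[
D(A) = D_1(A_1) \oplus \dots \oplus D_r(A_r) \subseteq \mathfrak{m}_1 \oplus \dots \oplus \mathfrak{m}_r = \mathfrak{J},
\]
which is the desired inclusion.

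The only subtle point worth flagging is the identification of the Jacobson radical of $A$ with the direct sum of the maximal ideals of the local factors; this is standard (in a finite dimensional commutative algebra the Jacobson radical coincides with the nilradical, and both are compatible with the Wedderburn–Malcev style decomposition of Proposition \ref{structure-finite-algebras}), so no real obstacle arises. In essence the corollary is a direct packaging of Proposition \ref{derlocal} together with the decomposition results that precede it.
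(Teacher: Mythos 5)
Your proof is correct and follows exactly the same route as the paper, whose own proof is simply the citation of Proposition \ref{structure-finite-algebras}, Lemma \ref{derivation-sum} and Proposition \ref{derlocal}; you merely make explicit the (standard) identification $\mathfrak{J} = \mathfrak{m}_1 \oplus \dots \oplus \mathfrak{m}_r$, which the paper leaves implicit.
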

\begin{proof}
It follows from Proposition \ref{structure-finite-algebras}, Lemma \ref{derivation-sum} and
Proposition \ref{derlocal}.
\end{proof}

The \emph{Wedderburn-Malcev Theorem} \cite[Thm. 72.19]{CR}, said that if $A$ is a finite dimensional algebra over a field
$\mathbb{K}$ with \emph{Jacobson radical}
$\mathfrak{J}$ and such that the residue class algebra
$A/\mathfrak{J}$ is separable, then there exists a semisimple subalgebra
$S$ of $A$, such that
$A = S \oplus \mathfrak{J}$ as a vector space.

On the other hand,
the \emph{Levi-Malcev theorem} \cite[Thm. B.2]{K} said that if
$\mathfrak{g}$ is a finite dimensional Lie algebra, then
$\mathfrak{g}$ can be decomposed as
$\mathfrak{s} \otimes_{\pi}\mathfrak{r}$, where
$\mathfrak{s}$ is a semisimple Lie algebra and
$\mathfrak{r} = \rad \mathfrak{g}$ denote the radical of the Lie algebra
$\mathfrak{g}$.
Applying this theorem to
$\der(\mathfrak{g})$ we obtain
$\der(\mathfrak{g}) =  \mathfrak{s} \ltimes \mathfrak{r}$, a semidirect product of its solvable radical
$\mathfrak{r}$ by a semisimple subalgebra
$\mathfrak{s}$ (the \emph{Levi complement} of
$\mathfrak{r}$).

We are ready to state the main results of the paper.
We can now formulate our main results of this section.

\begin{theorem}\label{Teo:Principal}
Let
$A$ be  a complex finite dimensional, associativite and commutative algebra with unit and let
$S$ be a semisimple subalgebra of
$A$ given by the Wedderburn-Malcev Theorem. Let
$\mathfrak{g}$ be a complex finite dimensional Lie algebra such that the center
$\mathfrak{z}(\mathfrak{g})$ is contained in
$[\mathfrak{g}, \mathfrak{g}]$.  If
$ \der(A)$ is a solvable Lie algebra then
the radical of the Lie algebra
$\der(\mathfrak{g}_A)$,
$\widetilde{\mathfrak{r}}$, is
\begin{eqnarray}\label{rad}
\widetilde{\mathfrak{r}}&=& \left(\mathfrak{s} \otimes \mathfrak{J}\right) +
 \left(\mathfrak{r} \otimes A\right) + \left(\Hom_{\mathfrak{g}}(\mathfrak{g},\mathfrak{g}) \otimes \der(A)\right)
+\\
 \nonumber      &&+ \Hom(\mathfrak{g}/ [\mathfrak{g},\mathfrak{g}], \mathfrak{z}(\mathfrak{g})) \otimes \End(A).
\end{eqnarray}
and
$\mathfrak{s} \otimes S$ correspond to a semisimple Lie subalgebra of
$\der(\mathfrak{g}_A)$.
\end{theorem}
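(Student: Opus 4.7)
The plan is to identify $\widetilde{\mathfrak{r}}$ as the radical by showing that it is a solvable ideal of $\der(\mathfrak{g}_A)$, while $\mathfrak{s}\otimes S$ provides a semisimple Lie subalgebra complementing it; the maximality of the radical then forces equality. The vector-space decomposition $\der(\mathfrak{g}_A)=(\mathfrak{s}\otimes S)+\widetilde{\mathfrak{r}}$ follows immediately from Lemma~\ref{lema23}, the Levi--Malcev splitting $\der(\mathfrak{g})=\mathfrak{s}\ltimes\mathfrak{r}$, and the Wedderburn--Malcev splitting $A=S\oplus\mathfrak{J}$. Since $S$ is a finite-dimensional semisimple commutative $\mathbb{C}$-algebra we have $S\cong\mathbb{C}^{r}$, so the bracket $[s_1\otimes\sigma_1,s_2\otimes\sigma_2]=[s_1,s_2]\otimes\sigma_1\sigma_2$ read from Table~\ref{Tabla:1} identifies $\mathfrak{s}\otimes S$ with $\mathfrak{s}^{r}$, which is semisimple.

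To show $\widetilde{\mathfrak{r}}$ is an ideal, I would use that $\mathfrak{k}$ is already an ideal by Proposition~\ref{vector_space_decomposition}(b) and then bracket $\mathfrak{s}\otimes S$ against each remaining summand of $\widetilde{\mathfrak{r}}$ using Table~\ref{Tabla:1}. The inclusions $[\mathfrak{s}\otimes S,\mathfrak{s}\otimes\mathfrak{J}]\subseteq\mathfrak{s}\otimes\mathfrak{J}$ and $[\mathfrak{s}\otimes S,\mathfrak{r}\otimes A]\subseteq\mathfrak{r}\otimes A$ are immediate from $S\cdot\mathfrak{J}\subseteq\mathfrak{J}$ and $[\mathfrak{s},\mathfrak{r}]\subseteq\mathfrak{r}$. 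The delicate bracket is $[\mathfrak{h},W]$, whose expansion $[D,T]\otimes(a\rho)-(T\circ D)\otimes\rho(a)$ would a priori contain a summand of $\der(\mathfrak{g})\otimes\rho(A)$ possibly outside $\widetilde{\mathfrak{r}}$; here Corollary~\ref{remark Importante} is decisive, as it guarantees $\rho(a)\in\mathfrak{J}$ and hence places the second term in $\mathfrak{s}\otimes\mathfrak{J}+\mathfrak{r}\otimes\mathfrak{J}\subseteq\widetilde{\mathfrak{r}}$.

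The heart of the argument is the solvability of $\widetilde{\mathfrak{r}}$. The hypothesis $\mathfrak{z}(\mathfrak{g})\subseteq[\mathfrak{g},\mathfrak{g}]$ enters in an essential way: for any $T_1,T_2\in\Hom(\mathfrak{g}/[\mathfrak{g},\mathfrak{g}],\mathfrak{z}(\mathfrak{g}))$ one has $T_1(\mathfrak{g})\subseteq\mathfrak{z}(\mathfrak{g})\subseteq[\mathfrak{g},\mathfrak{g}]\subseteq\ker T_2$, so $T_2\circ T_1=0$ and the $(\mathfrak{k},\mathfrak{k})$-cell of Table~\ref{Tabla:1} gives $[\mathfrak{k},\mathfrak{k}]=0$; thus $\mathfrak{k}$ is an abelian ideal and it suffices to prove $\widetilde{\mathfrak{r}}/\mathfrak{k}$ is solvable. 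On the one hand, $M:=(\mathfrak{s}\otimes\mathfrak{J})+(\mathfrak{r}\otimes A)$ is a solvable Lie subalgebra: its ideal $\mathfrak{r}\otimes A$ is solvable because $\mathfrak{r}$ is, and the quotient $\mathfrak{s}\otimes\mathfrak{J}$ is nilpotent because $\mathfrak{J}$ is. On the other hand, the $W$-component of $[W,W]$ in Table~\ref{Tabla:1} is $(T_2\circ T_1)\otimes[\rho_1,\rho_2]$, so by induction the derived series of $W$ modulo $\mathfrak{k}$ lies in $\Hom_{\mathfrak{g}}(\mathfrak{g},\mathfrak{g})\otimes\der(A)^{(n)}$, which vanishes by the hypothesis that $\der(A)$ is solvable. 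Combining these with the fact that $\rho(\mathfrak{J})\subseteq\mathfrak{J}$ (again Corollary~\ref{remark Importante}) yields a finite filtration of $\widetilde{\mathfrak{r}}/\mathfrak{k}$ with abelian or nilpotent quotients, completing the proof.

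I expect the principal obstacle to be precisely this last bookkeeping: the mixed bracket $[M,W]$ re-injects into $M$ through the term $(T\circ D)\otimes\rho(a)$, so rather than treating $M$ and $W$ as independent ideals one has to interleave the $\mathfrak{J}$-power filtration with the derived series of $\mathfrak{r}$ and of $\der(A)$ so that the combined filtration on $\widetilde{\mathfrak{r}}/\mathfrak{k}$ actually terminates.
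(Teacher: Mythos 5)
Your proposal is correct and follows essentially the same route as the paper: the same decomposition of $\der(\mathfrak{g}_A)$ via Lemma \ref{lema23} combined with the Levi--Malcev and Wedderburn--Malcev splittings, the same appeal to Corollary \ref{remark Importante} to force the term $(T\circ D)\otimes\rho(a)$ of $[\mathfrak{h},W]$ into $\der(\mathfrak{g})\otimes\mathfrak{J}\subseteq\widetilde{\mathfrak{r}}$, the same identification $S\cong\mathbb{C}^{r}$ yielding semisimplicity of $\mathfrak{s}\otimes S$, and the same use of $\mathfrak{z}(\mathfrak{g})\subseteq[\mathfrak{g},\mathfrak{g}]$ to make $\mathfrak{k}$ abelian before invoking the solvability of $\der(A)$ and the nilpotency of $\mathfrak{J}$. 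The only divergence is organizational --- you quotient by $\mathfrak{k}$ and filter $\widetilde{\mathfrak{r}}/\mathfrak{k}$, whereas the paper bounds the derived series of each summand directly --- and the interleaving of filtrations you flag as the remaining bookkeeping is handled no more explicitly in the paper's own proof.
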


\begin{proof}
Combining the Levi-Malcev Theorem and Wedderburn-Malcev Theorem we obtain
\begin{eqnarray}\label{eq:dergA}
\der(\mathfrak{g}) \otimes A &=& \left(\mathfrak{s} \otimes S\right) \oplus
 \left(\mathfrak{s} \otimes \mathfrak{J}\right) \oplus
 \left(\mathfrak{r} \otimes A\right).
\end{eqnarray}
From Lema \ref{lema23},
$$
\der(\mathfrak{g}_A)= \left(\mathfrak{s} \otimes S\right) \oplus \widetilde{\mathfrak{r}}.
$$

Let
$\mathfrak{h}$, $W$ and $\mathfrak{k}$ denote
$\der(\mathfrak{g}) \otimes A$,
$\Hom_{\mathfrak{g}}(\mathfrak{g}, \mathfrak{g}) \otimes \der(A)$ and
$\Hom(\mathfrak{g}/ [\mathfrak{g},\mathfrak{g}], \mathfrak{z}(\mathfrak{g})) \otimes \End(A)$, respectively.

We first prove that
$\widetilde{\mathfrak{r}}$ is an ideal of
$\der(\mathfrak{g}_A)$.
Let
$D \in \der(A)$, from Corollary \ref{remark Importante} and Table \ref{Tabla:1}, we obtain
\begin{eqnarray}\label{ideal-1}
[\mathfrak{h}, W] \subset (\der(\mathfrak{g}) \otimes \mathfrak{J}) +  W.
\end{eqnarray}

Since
$[\mathfrak{s} \otimes \mathfrak{J}, \mathfrak{s} \otimes A] \subseteq \mathfrak{s} \otimes \mathfrak{J}$ and
$[\mathfrak{s} \otimes \mathfrak{J}, \mathfrak{r} \otimes A] \subseteq \mathfrak{r} \otimes \mathfrak{J}$, we obtain
$$
[\mathfrak{h}, \mathfrak{s} \otimes \mathfrak{J} ]\subseteq \mathfrak{s} \otimes \mathfrak{J} + \mathfrak{r} \otimes A.
$$
Combining this with \eqref{ideal-1} and with the results in Table \ref{Tabla:1}, we have
$\widetilde{\mathfrak{r}}$ is an ideal of
$\der(\mathfrak{g}_A)$.

We shall now prove that
$\widetilde{\mathfrak{r}}$ is solvable ideal.
We will denote by the derived series of $I$ with
\begin{eqnarray}
I^{(n)} = [I^{(n-1)}, I^{(n-1)}] \;\; \text{ and } \;\; I^{(0)}= I \nonumber
\end{eqnarray}
for $n \in N$.
The following facts will lead us to the proof of the solvability of
$\widetilde{\mathfrak{r}}$.
\begin{enumerate}[(1)]
\item Since
      $\mathfrak{J}$ is the Jacobson radical of
      $A$, there exists
      $m \in \mathbb{N}$ such that
      $\mathfrak{J}^{m} = 0$, that is,
      \begin{equation}\label{eq:jacobson}
      a_1 \dots a_m = 0 \text{ for all }  a_i \in \mathfrak{J} \text{ with } i=1, \ldots , m
      \end{equation}
      It follows that
      $
      (\der(\mathfrak{g})\otimes \mathfrak{J})^{m} = 0.
      $

\medskip

\item If $T_i \otimes \rho_i \in W$ for $i=1, 2$, from Table \ref{Tabla:1}, we have
      \begin{eqnarray}
      [T_1 \otimes \rho_1, T_2 \otimes \rho_2]& =& \underbrace{[T_1, T_2] \otimes (\rho_1 \circ \rho_2)}_{\in \mathfrak{k}} + \underbrace{(T_2 \circ T_1) \otimes [\rho_1, \rho_2]}_{\in \Hom_{\mathfrak{g}}(\mathfrak{g},\mathfrak{g}) \otimes \der(A)^{(1)}}\;.\nonumber
      \end{eqnarray}
      By hypothesis
      $\mathfrak{z}(\mathfrak{g}) \subseteq [\mathfrak{g}, \mathfrak{g}]$ and from Table \ref{Tabla:1} we can deduce that
      $\mathfrak{k}$ is an abelian Lie subalgebra of
      $\der(\mathfrak{g}_A)$. Hence
      $\mathfrak{k}^{(1)}=0$, thus
      \begin{eqnarray}\label{eq:W1}
      W^{(n)} &\subseteq& \mathfrak{k} + \Hom_{\mathfrak{g}}(\mathfrak{g}, \mathfrak{g}) \otimes \der(A)^{(n)}\;. 
      \end{eqnarray}
The hypothesis indicates us that $\der(A)$ is a solvable Lie algebra, then there exists
$k \in \mathbb{N}$ such that
$\der(A)^{(k)}= 0$. Therefore
      \begin{eqnarray}\label{eq:W2}
      W^{(k + 1)} &=& 0.
      \end{eqnarray}

\medskip

\item Since
      $\der(A)$ is a solvable Lie algebra and from \eqref{eq:jacobson} we obtain
\begin{eqnarray}
[\mathfrak{h}, W]^{(l)} &\subseteq& \der(\mathfrak{g}) \otimes \mathfrak{J} + \mathfrak{k}. \nonumber
\end{eqnarray}
Thus, from Table \ref{Tabla:1}
\begin{eqnarray} \label{eq:W1W2}
[\mathfrak{h}, W]^{(k + l)} = 0.
\end{eqnarray}
\end{enumerate}

We conclude from Table \ref{Tabla:1},\eqref{eq:W1},
\eqref{eq:W2} and
\eqref{eq:W1W2} that
$\widetilde{\mathfrak{r}}$ is a solvable ideal.
We conclude that
$\widetilde{\mathfrak{r}}$ is a solvable ideal.

Before we check that
$\widetilde{\mathfrak{r}}$ is maximal, we will prove that
$\mathfrak{s} \otimes S$ is semisimple. In fact, since $S$ is a semisimple algebra over $\mathbb{C}$ and  $A$ is a complex finite dimensional, associative and commutative algebra, then from the \emph{Artin-Wedderburn Theorem} we get that there exits $n \in \mathbb{N}$ such that
\begin{eqnarray}
S &\simeq& \underbrace{\mathbb{C} \oplus \dots \oplus \mathbb{C}}_{n \;\text{copies}}.
\end{eqnarray}
Therefore
$\mathfrak{s}\otimes S$ is isomorphic to direct sum of
$k$ copies of $\mathfrak{s}$ and then it is a complex semisimple Lie algebra.
It follows that
$\left(\mathfrak{s}\otimes S\right) \cap \widetilde{\mathfrak{r}} = 0$, therefore
\begin{eqnarray}
\der(\mathfrak{g}_A) &=& \left(\mathfrak{s}\otimes S\right) \oplus \widetilde{\mathfrak{r}}. \nonumber
\end{eqnarray}
with
$\widetilde{\mathfrak{r}}$ a maximal solvable ideal and $\mathfrak{s} \otimes S$ a semisimple subalgebra.
This completes the proof.
\end{proof}

\begin{corollary}\label{Coro:1}
Under the same hypothesis as Theorem \ref{Teo:Principal} the Levi factor of
$\der(\mathfrak{g}_A)$ is
$$
\mathfrak{s} \otimes S.
$$
\end{corollary}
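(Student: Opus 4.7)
The plan is to read Corollary~\ref{Coro:1} directly off the decomposition produced by Theorem~\ref{Teo:Principal}, combined with the Levi--Malcev theorem applied to $\der(\mathfrak{g}_A)$ itself. Indeed, the theorem has already done all the real work: it exhibits
\[
\der(\mathfrak{g}_A) \;=\; (\mathfrak{s} \otimes S) \oplus \widetilde{\mathfrak{r}}
\]
as a vector space, identifies $\widetilde{\mathfrak{r}}$ as a solvable ideal, and shows that $\mathfrak{s} \otimes S$ is a semisimple Lie subalgebra. So essentially all that is left is a structural read-off.

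First I would confirm that $\widetilde{\mathfrak{r}}$ is genuinely the radical of $\der(\mathfrak{g}_A)$, and not merely some solvable ideal properly contained in it. This is the minor point noted at the end of the proof of Theorem~\ref{Teo:Principal}: any solvable ideal $\mathfrak{I}$ strictly containing $\widetilde{\mathfrak{r}}$ would intersect $\mathfrak{s} \otimes S$ in a nonzero solvable ideal of $\mathfrak{s} \otimes S$, which is impossible because $\mathfrak{s} \otimes S$ is a direct sum of finitely many copies of $\mathfrak{s}$ and hence semisimple. Thus $\widetilde{\mathfrak{r}} = \operatorname{rad} \der(\mathfrak{g}_A)$.

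Next I would invoke Levi--Malcev for $\der(\mathfrak{g}_A)$: there exists a semisimple Lie subalgebra $\widetilde{\mathfrak{s}} \subseteq \der(\mathfrak{g}_A)$, a Levi complement, with $\der(\mathfrak{g}_A) = \widetilde{\mathfrak{s}} \ltimes \widetilde{\mathfrak{r}}$. Since $(\mathfrak{s} \otimes S) \cap \widetilde{\mathfrak{r}} = 0$ and $(\mathfrak{s} \otimes S) + \widetilde{\mathfrak{r}} = \der(\mathfrak{g}_A)$, the composition
\[
\mathfrak{s} \otimes S \;\hookrightarrow\; \der(\mathfrak{g}_A) \;\twoheadrightarrow\; \der(\mathfrak{g}_A)/\widetilde{\mathfrak{r}}
\]
is an isomorphism of Lie algebras, so $\mathfrak{s} \otimes S$ is itself a Levi complement of the radical. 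This is precisely the content of the corollary.

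I do not anticipate any real obstacle: the corollary is an immediate consequence of Theorem~\ref{Teo:Principal} together with the standard existence of a Levi factor. The only conceptual subtlety is that the Levi complement is unique only up to conjugation by an inner automorphism of the radical, but the corollary is stated at the level of isomorphism classes (equivalently, as the quotient $\der(\mathfrak{g}_A)/\widetilde{\mathfrak{r}}$), and the identification above delivers exactly this.
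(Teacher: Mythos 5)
Your proposal is correct and follows essentially the same route as the paper, which states the corollary without further argument precisely because the proof of Theorem \ref{Teo:Principal} already ends by exhibiting $\der(\mathfrak{g}_A)=(\mathfrak{s}\otimes S)\oplus\widetilde{\mathfrak{r}}$ with $\widetilde{\mathfrak{r}}$ a maximal solvable ideal and $\mathfrak{s}\otimes S$ a semisimple complementary subalgebra. Your explicit verification that $\widetilde{\mathfrak{r}}$ is maximal (via the semisimplicity of $\mathfrak{s}\otimes S\cong\der(\mathfrak{g}_A)/\widetilde{\mathfrak{r}}$) just spells out the read-off the authors leave implicit.
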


\begin{exampleSN}
\begin{enumerate}[(1)]
\item If
$A= \mathbb{C} \oplus \mathfrak{J}$, from Corollary \ref{Coro:1}, the Levi factor of
$\der(\mathfrak{g}_A)$ is the Levi factor of
$\mathfrak{g}$.

\medskip

\item Let
      $\mathfrak{g}$ be a semisimple Lie algebra then the radical
      $\widetilde{\mathfrak{r}}$ is
      $$
      \widetilde{\mathfrak{r}}= \left(\mathfrak{g} \otimes \mathfrak{J}\right) +
      \left(\mathbb{C} \otimes \der(A)\right)
      $$
      and the Levi factor of
      $\der(\mathfrak{g}_A)$ is
      $\mathfrak{g} \otimes S$.

\medskip

\item It is clear that if $A$ is a finite dimensional algebra over
$\mathbb{K}$ then $\der_{\mathbb{K}}(A)$ is the Lie algebra of the algebraic group
$\aut_{\mathbb{K}}(A)$, the group of algebra automorphisms of
$A$ \cite[\textsection 2.3, exam. 2]{OV}. In \cite[\textsection, rem. 2.7]{GS} the authors provide us with an example of a finite dimensional algebra $A$ with non solvable group of authomorphisms, the example they've built goes as follows. Let $I$ be the ideal in $\mathbb{K}[x,y,z]$ generated by $x^{2}, xy, y^{2}$ and all the monomials in $x,y,z$ of degree $3$. If we take $A= \mathbb{K}[x,y,z]/I$ then the  identity component of $\aut(A)$ is not solvable and, in consequence,
$\der_{\mathbb{K}}(A)$ isn't.
\end{enumerate}
\end{exampleSN}

\section{A Faithful Representations of Derivations Lie Algebra of Current Truncated Heisenberg Lie algebras}\label{current}


Let
$m \in \mathbb{N}$ and let
$\mathfrak{h}_m$ be the Heisenberg Lie algebra of dimension
$2m + 1$. Our main interest is in the derivations Lie algebra of truncated current Heisenberg Lie algebra
$\mathfrak{h}_{m,k}:= \mathfrak{h}_m \otimes \mathbb{C}[t]/ (t^{k + 1})$. We denote by
$\mathcal{B}=  \{e_1, \dots, e_m, f_1, \dots, f_m, x\}$ a bases of
$\mathfrak{h}_m$ whose only non-zero brackets are
\begin{align*}
[e_i, f_i]&= Z
\end{align*}
and let
$\mathcal{C}= \{1, t, \dots, t^k\}$ be the canonical basis of
$\mathbb{C}[t]/ (t^{k + 1})$.
Then a bases of
$\mathfrak{h}_{m,k}$ is
\begin{eqnarray}\label{eq:base}
\mathcal{B}_{m,k}= \left\{e_{i,0}, \dots, e_{i,k}, f_{i,0}, \dots, f_{i,k}, x_0, \dots, x_k : \text{ for } i= 1, \dots, m\right\}.\nonumber
\end{eqnarray}
such that
\begin{eqnarray}
[e \otimes t^i, f \otimes t^j] &=&
\begin{cases}
z \otimes t^{i+j}, &\text{ if } i+j \leq k; \\
0,  & \text{ otherwise}
\end{cases} \nonumber
\end{eqnarray}
and
$[e \otimes t^i, z \otimes t^j]= [f \otimes t^i, z \otimes t^j]= 0$
for
$i,j= 0, \dots, k$.

In order to apply here all the results developed in the preceding sections
we need the following facts:
\begin{enumerate}[(1)]
\item \label{Derivations_Heisenberg}
      A faithful representations of the derivations Lie algebra of
the Heisenberg Lie algebra of dimension $2m+1$ in the basis
$\mathcal{B}$. Let
$$
\pi_0: \der(\mathfrak{h}_m) \rightarrow \mathfrak{gl}(2m + 1)
$$
is expressed, in the canonical basis of
$M_{2m+1, 2m+1}$,
$$
\pi_0(D)=  \begin{bmatrix}
A + aI_{2m} & 0\\
x^{t} & 2a
\end{bmatrix},
$$
where
$A \in \mathfrak{sp}_{2m}(\mathbb{C})$,
$I_{2m}$ is the identity matrix of size
$2m$ with complex entries,
$a \in \mathbb{C}$,
$x \in \mathbb{C}^{2n}$ and $0$ is the zero vector in
$\mathbb{C}^{2n}$. From here we can deduce that the Levi factor of
$\der(\mathfrak{h}_{2m+1})$, is given by
$\mathfrak{sp}_{2m}(\mathbb{C})$ and its radical is the ideal of matrices of the form
$
\begin{bmatrix}
aI_{2m} & 0 \\
x^{t} & 2a
\end{bmatrix}
$.

\medskip

\item The regular representation of
$\mathbb{C}[t]/ (t^{k + 1})$ is
$$
R: \mathbb{C}[t]/ (t^{k + 1}) \to \End_{\mathbb{C}}\left( \mathbb{C}[t]/ (t^{k + 1}) \right)
$$
is expressed, in terms of the canonical basis
$\mathcal{C}$, by
$$
R\left(\sum_{i=0}^k a_i t^i\right)= \left[\begin{smallmatrix}
                          a_0  &       &        &        &        &  \\
                          a_1  & a_0    &        &        &   0     & \\
                          a_2  & a_1    & a_0   &        &        &  \\
                          \vdots      & \vdots    & \ddots   & \ddots &        &  \\
                          a_{k-1} & a_{k-2} & \dots & a_1 & a_0 & \\
                          a_k      & a_{k-1}  & \dots   & a_2 & a_1 & a_0
                \end{smallmatrix}\right].
$$

\medskip

\item Let us define an auxiliary linear map
      $\overline{R}: \mathbb{C}[t]/ (t^{k + 1}) \to \mathbb{C}[t]/ (t^{k + 1})$ whose representation in the canonical basis
      $\mathcal{C}$ is given by
$$
\overline{R}\left(\sum_{i=0}^k a_i t^i\right)= \left[\begin{smallmatrix}
                          0  &       &        &        &        &  \\
                          0  & a_1    &        &        &   0     & \\
                          0  & a_2    & 2a_1   &        &        &  \\
                          \vdots      & \vdots    & \ddots   & \ddots &        &  \\
                         0  & a_{k-1} & \dots & (k-2)a_2 & (k-1)a_1 & \\
                          0      & a_{k}  & \dots   & (k-2)a_3 & (k-1)a_2 & ka_1
                \end{smallmatrix}\right].
$$
\end{enumerate}

\begin{theorem}\label{Teo:Derivations}
Let
$m, k$ be the positive integer such that
$m> 0$ and let
$\rho \in \der(\mathfrak{h}_{m,k})$. Then, in the base
$\mathcal{B}$ the linear map
$\rho$ has the matrix representation
$$
\setlength{\unitlength}{8mm}
\begin{picture}(10,5)(0,-1)
\linethickness{0.3mm}
\put(0,0){\line(0,1){3.5}}
\put(6.5,0){\line(0,1){3.5}}
\put(9,0){\line(0,1){3.5}}
\put(0,0){\line(1,0){.2}}
\put(0,3.5){\line(1,0){.2}}
\put(9,3.5){\line(-1,0){.2}}
\put(9,0){\line(-1,0){.2}}
\linethickness{0.1mm}
\multiput(3.25,1.1)(0,.2){12}{\line(0,1){.1}}
\put(0,1){\line(1,0){9}}
\multiput(0,2.25)(.2,0){33}{\line(1,0){.1}}
\scriptsize{
\put(0.35,2.85){$\small{A_1 + R_1+ R_2}$}
\put(4,2.85){$\small{A_2}$}

\put(1,1.5){$\small{A_4}$}
\put(6.55,0.5){$\small{2 R(p) + \overline{R}(q)}$}
\put(7.5,2.25){$\small{0}$}

\put(1.2,0.45){$\ast$}
\put(4.6,0.45){$\ast$}

\put(3.5,1.5){$\small{A_3 + R_1+ R_2}$}
\put(0,0){$\underbrace{\rule{73pt}{0pt}}_{\text{\tiny{$m(k+1)$}}}$}
\put(3.2,0){$\underbrace{\rule{73pt}{0pt}}_{\text{\tiny{$m(k+1)$}}}$}
\put(6.45,0){$\underbrace{\rule{58pt}{0pt}}_{\text{\tiny{$k+1$}}}$}
}
\end{picture}
$$
such that:
\begin{enumerate}[(a)]
\item For every $r \in \{1,2,3,4\}$ the matrix $A_r$ is a block matrix
      $$
      A_r= \left[
           \begin{smallmatrix}
           [A_r]_{11} & \dots  & [A_r]_{1m} \\
            \vdots    &        & \vdots\\
            [A_r]_{m1}    &  \dots     &  [A_r]_{mm}
           \end{smallmatrix}
           \right]
      $$
where each block $[A_r]_{ij} \in M_{k+1,k+1}$ is a lower triangular matrix. More exactly, for every $i,j= 1, \dots, m$ we have
\begin{itemize}
\item The blocks $[A_r]_{ij}$ are lower triangular Toeplitz matrices;
\item $[A_3]_{ij}= -[A_1]_{ji}$;
\item $[A_2]_{ij}= [A_2]_{ji}$ and
\item $[A_4]_{ij}= [A_4]_{ji}$.
\end{itemize}
\item $R_1$ and
      $R_2$ are the block matrix
      $$
      R_1= I_{m} \otimes R(p) = \left[
           \begin{smallmatrix}
           R(p) &  & 0 \\
                & \ddots& \\
            0    &       & R(p)
           \end{smallmatrix}
           \right] \qquad \text{ and } \qquad
           R_2 = I_{m} \otimes \overline{R}(p)= \left[
           \begin{smallmatrix}
           \overline{R}(q) &  & 0 \\
                & \ddots& \\
            0    &       & \overline{R}(q)
           \end{smallmatrix}
           \right]
      $$
      where $\otimes$ stand for the Kronecker tensor product of matrices.
\end{enumerate}
Moreover, the block matrices $A_r$ can be written as $[A_r]_{ij} = [B_r]_{ij} + [D_r]_{ij}$ where $[D_r]_{ij}$ is diagonal matrix, multiple of the identity, such that the matrices of the form
$$
\setlength{\unitlength}{8mm}
\begin{picture}(10,5)(0,-1)
\linethickness{0.3mm}
\put(0,0){\line(0,1){3.5}}
\put(6.5,0){\line(0,1){3.5}}
\put(9,0){\line(0,1){3.5}}
\put(0,0){\line(1,0){.2}}
\put(0,3.5){\line(1,0){.2}}
\put(9,3.5){\line(-1,0){.2}}
\put(9,0){\line(-1,0){.2}}
\linethickness{0.1mm}
\multiput(3.25,1.1)(0,.2){12}{\line(0,1){.1}}
\put(0,1){\line(1,0){9}}
\multiput(0,2.25)(.2,0){33}{\line(1,0){.1}}
\scriptsize{
\put(1,2.85){$\small{D_1}$}
\put(4.4,2.85){$\small{D_2}$}

\put(1,1.5){$\small{D_4}$}
\put(7.5,2.25){$0$}

\put(1.2,0.45){$0$}
\put(4.6,0.45){$0$}
\put(7.5,0.45){$0$}

\put(4.4,1.5){$\small{D_3}$}
\put(0,0){$\underbrace{\rule{73pt}{0pt}}_{\text{\tiny{$m(k+1)$}}}$}
\put(3.2,0){$\underbrace{\rule{73pt}{0pt}}_{\text{\tiny{$m(k+1)$}}}$}
\put(6.45,0){$\underbrace{\rule{58pt}{0pt}}_{\text{\tiny{$k+1$}}}$}
}
\end{picture}
$$
becomes a representation of the simplectic Lie algebra $\mathfrak{sp}_{2n}(\mathbb{C})$ and constitutes a Levi factor of $\der(\mathfrak{h}_{m,k})$. In fact, the $D$'s block is equals to $D \otimes I_{k+1}$ with $D \in \mathfrak{sp}_{2n}(\mathbb{C})$.
\end{theorem}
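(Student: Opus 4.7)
The plan is to apply Lemma~\ref{lema23} to the pair $(\mathfrak{g},A)=(\mathfrak{h}_m,\mathbb{C}[t]/(t^{k+1}))$ and compute, block by block, the matrix of each Zusmanovich summand in the ordered basis $(e_{1,0},\dots,e_{m,k}\mid f_{1,0},\dots,f_{m,k}\mid x_0,\dots,x_k)$. Every $\rho\in\der(\mathfrak{h}_{m,k})$ can be written as $\rho=\sum D_l\otimes a_l+\sum T_j\otimes \delta_j+\sum F_r\otimes\varphi_r$ with $D_l\in\der(\mathfrak{h}_m)$, $T_j$ in the centroid of $\mathfrak{h}_m$, $F_r\in\Hom(\mathfrak{h}_m/[\mathfrak{h}_m,\mathfrak{h}_m],\mathfrak{z}(\mathfrak{h}_m))$, and scalars in $A$, $\der(A)$, $\End(A)$ respectively, and I would treat each of the three terms separately.

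For the term $D\otimes a$ I would use the embedding $\pi_0$ recalled at the beginning of the section: each scalar entry $\alpha$ of $\pi_0(D)$ becomes, after tensoring with $a\in A$, the block $\alpha R(a)$, which is by construction a lower triangular Toeplitz matrix. This yields the four blocks $A_1,A_2,A_3,A_4$ with Toeplitz entries, and the symplectic relations $[A_3]_{ij}=-[A_1]_{ji}$, $[A_2]_{ij}=[A_2]_{ji}$, $[A_4]_{ij}=[A_4]_{ji}$ are inherited entry-wise from the structure of $\mathfrak{sp}_{2m}(\mathbb{C})$. The scalar piece $aI_{2m}$ in $\pi_0(D)$ contributes $R_1=I_m\otimes R(p)$ on the $e$-$e$ and $f$-$f$ diagonals, and the matching entry $2a$ contributes $2R(p)$ on the $x$-$x$ block, the factor $2$ reflecting that $D$ is a genuine derivation and $D(x)=D([e_i,f_i])=[D(e_i),f_i]+[e_i,D(f_i)]$. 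The bottom vector $x^{t}$ of $\pi_0(D)$ lands in the two starred blocks of the last block-row.

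For the term $T\otimes\delta$ I first need to pin down the centroid of $\mathfrak{h}_m$: a direct computation with $\mathrm{ad}_{e_i},\mathrm{ad}_{f_j}$ shows that $T(e_i)=\lambda e_i+\gamma'_i x$, $T(f_i)=\lambda f_i+\gamma_i x$ and $T(x)=\lambda x$ with one common scalar $\lambda$; crucially $T(x)=[T(e_i),f_i]$ has only one term because $T$ is $\mathfrak{h}_m$-linear rather than a derivation, which is precisely why the coefficient in front of $\overline{R}(q)$ on the $x$-$x$ block is $1$ and not $2$. The matrix of a derivation $\delta\in\der(A)$ with $\delta(t)=q$ on the basis $\{1,t,\dots,t^k\}$ is exactly $\overline{R}(q)$, since $\delta(t^s)=s\,t^{s-1}q$. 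Thus the scalar part of $T$ tensored with $\delta$ produces $R_2=I_m\otimes\overline{R}(q)$ on the $e$-$e$ and $f$-$f$ diagonals and $\overline{R}(q)$ on the $x$-$x$ block, while the $\gamma,\gamma'$ components land only in the starred bottom blocks. The third term $F\otimes\varphi$ has image in $\mathfrak{z}(\mathfrak{h}_m)=\mathbb{C}x$ and vanishes on $[\mathfrak{h}_m,\mathfrak{h}_m]$, so it too contributes only to those starred blocks, completing the description of the matrix form.

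For the Levi-factor statement I would invoke Corollary~\ref{Coro:1}: the Wedderburn--Malcev complement of $A=\mathbb{C}[t]/(t^{k+1})$ is $S=\mathbb{C}\cdot 1$ and the Levi factor of $\der(\mathfrak{h}_m)$ is $\mathfrak{sp}_{2m}(\mathbb{C})$ by the representation recalled in item~(1), whence the Levi factor of $\der(\mathfrak{h}_{m,k})$ is $\mathfrak{sp}_{2m}(\mathbb{C})\otimes\mathbb{C}\cong\mathfrak{sp}_{2m}(\mathbb{C})$. In the matrix presentation constructed above, tensoring with $1\in A$ sends each scalar entry to $R(1)=I_{k+1}$, so in the decomposition $[A_r]_{ij}=[B_r]_{ij}+[D_r]_{ij}$ the piece $[D_r]_{ij}$ is a scalar multiple of $I_{k+1}$ and the resulting block matrix is precisely $D\otimes I_{k+1}$ with $D\in\mathfrak{sp}_{2m}(\mathbb{C})$. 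The main obstacle will be the careful bookkeeping of which Zusmanovich summand contributes to which block, and in particular tracking the two different multiplicities on the $x$-$x$ block, which encode the distinction between genuine derivations and centroid elements acting on the centre $\mathbb{C}x$.
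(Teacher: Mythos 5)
Your proposal is correct and follows essentially the same route as the paper: decompose $\der(\mathfrak{h}_{m,k})$ via Zusmanovich's lemma and Theorem \ref{Teo:Principal}, compute the matrix shape of each summand (the $\pi_0$-image of $\der(\mathfrak{h}_m)$, the centroid, the maps into the centre, and $\der(\mathbb{C}[t]/(t^{k+1}))$ realized as $\overline{R}$), and assemble them by Kronecker products, with Corollary \ref{Coro:1} identifying the Levi factor as $\mathfrak{sp}_{2m}(\mathbb{C})\otimes\mathbb{C}$. In fact your bookkeeping is more explicit than the paper's (which compresses the assembly into one sentence), and your explanation of why the $x$-$x$ block carries $2R(p)$ from derivations but only $\overline{R}(q)$ from centroid elements is a worthwhile detail the paper omits.
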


\begin{proof}
From \eqref{Derivations_Heisenberg}, the Levi factor
$\mathfrak{s}$ of
$\der(\mathfrak{h}_{2m+1})$ consists of the matrices of the form
$$
\setlength{\unitlength}{7mm}
\begin{picture}(15,5.4)(0,-1)
\linethickness{0.3mm}
\put(3,0){\line(0,1){5}}
\put(9,0){\line(0,1){5}}
\put(3,0){\line(1,0){.2}}
\put(3,5){\line(1,0){.2}}
\put(9,5){\line(-1,0){.2}}
\put(9,0){\line(-1,0){.2}}
\linethickness{0.1mm}
\multiput(5.75,0.85)(0,.2){21}{\line(0,1){.1}}
\put(3,0.85){\line(1,0){6}}
\put(8,0){\line(0,1){5}}
\multiput(3,3)(.2,0){25}{\line(1,0){.1}}
\scriptsize{
\put(4.25,3.75){$X_1$}
\put(6.5,3.75){$X_2$}
\put(4.25,2){$X_3$}
\put(4.25,0.25){$\small{0}$}
\put(6.75,0.25){$\small{0}$}
\put(8.25,0.25){$\small{0}$}
\put(8.25,2.95){$\small{0}$}
\put(6.5,2){$X_4$}
\put(1.5,2.9){$[X]_{\mathcal{B}}=$}
\put(2.90,0){$\underbrace{\rule{54pt}{0pt}}_{\text{\tiny{$m$}}}$}
\put(5.65,0){$\underbrace{\rule{47pt}{0pt}}_{\text{\tiny{$m$}}}$}
\put(8,0){$\underbrace{\rule{19pt}{0pt}}_{\text{\tiny{$1$}}}$}
}
\end{picture}
$$
such that
$X_4= -X_1^t$, $X_2= X_2^t$ and  $X_3= X_3^t$. On the other hand, the matrices in the radical $\mathfrak{r}$ of $\der(\mathfrak{h}_{2m+1})$ are matrices of the form
$$
\setlength{\unitlength}{7mm}
\begin{picture}(15,5.8)(0,-1)
\linethickness{0.3mm}
\put(3,0){\line(0,1){5}}
\put(9,0){\line(0,1){5}}
\put(3,0){\line(1,0){.2}}
\put(3,5){\line(1,0){.2}}
\put(9,5){\line(-1,0){.2}}
\put(9,0){\line(-1,0){.2}}
\linethickness{0.1mm}
\multiput(5.75,0.85)(0,.2){21}{\line(0,1){.1}}
\put(3,0.85){\line(1,0){6}}
\put(8,0){\line(0,1){5}}
\multiput(3,3)(.2,0){25}{\line(1,0){.1}}
\scriptsize{
\put(4.25,3.75){$a I_{m}$}
\put(6.5,3.75){$\small{0}$}
\put(4.25,2){$\small{0}$}
\put(5.75,0.25){$\small{u}$}
\put(8.25,0.25){$\small{2a}$}
\put(8.25,2.95){$\small{0}$}
\put(6.5,2){$a I_{m}$}
\put(1.5,2.9){$[X]_{\mathcal{B}}=$}
\put(2.90,0){$\underbrace{\rule{54pt}{0pt}}_{\text{\tiny{$m$}}}$}
\put(5.65,0){$\underbrace{\rule{47pt}{0pt}}_{\text{\tiny{$m$}}}$}
\put(8,0){$\underbrace{\rule{19pt}{0pt}}_{\text{\tiny{$1$}}}$}
}
\end{picture}
$$
Other maps associated to the decomposition of
$\der(\mathfrak{h}_{m,k})$, according to Theorem \eqref{Teo:Principal},
are the following ones:
\begin{enumerate}[(i)]
\item Let
      $X \in \mathfrak{h}_m$, the
      $ad_X$ is the form
$$
\setlength{\unitlength}{7mm}
\begin{picture}(15,5.75)(0,-1)
\linethickness{0.3mm}
\put(3,0){\line(0,1){5}}
\put(9,0){\line(0,1){5}}
\put(3,0){\line(1,0){.2}}
\put(3,5){\line(1,0){.2}}
\put(9,5){\line(-1,0){.2}}
\put(9,0){\line(-1,0){.2}}
\linethickness{0.1mm}
\multiput(5.75,0.85)(0,.2){21}{\line(0,1){.1}}
\put(3,0.85){\line(1,0){6}}
\put(8,0){\line(0,1){5}}
\multiput(3,3)(.2,0){25}{\line(1,0){.1}}
\scriptsize{
\put(4.25,3.75){$0$}
\put(6.5,3.75){$\small{0}$}
\put(4.25,2){$\small{0}$}
\put(3.75,0.25){$\small{*}$}
\put(6.75,0.25){$\small{*}$}
\put(8.25,0.25){$\small{0}$}
\put(8.25,2.95){$\small{0}$}
\put(6.5,2){$0$}
\put(0.75,2.9){$[ad_X]_{\mathcal{B}}=$}
\put(2.90,0){$\underbrace{\rule{54pt}{0pt}}_{\text{\tiny{$m$}}}$}
\put(5.65,0){$\underbrace{\rule{47pt}{0pt}}_{\text{\tiny{$m$}}}$}
\put(8,0){$\underbrace{\rule{19pt}{0pt}}_{\text{\tiny{$1$}}}$}
}
\end{picture}
$$
\item If
      $T \in \Hom_{\mathfrak{h}_m}(\mathfrak{h}_m,\mathfrak{h}_m)$ it is easy to check that
$$
\setlength{\unitlength}{7mm}
\begin{picture}(15,5.75)(0,-1)
\linethickness{0.3mm}
\put(3,0){\line(0,1){5}}
\put(9,0){\line(0,1){5}}
\put(3,0){\line(1,0){.2}}
\put(3,5){\line(1,0){.2}}
\put(9,5){\line(-1,0){.2}}
\put(9,0){\line(-1,0){.2}}
\linethickness{0.1mm}
\multiput(5.75,0.85)(0,.2){21}{\line(0,1){.1}}
\put(3,0.85){\line(1,0){6}}
\put(8,0){\line(0,1){5}}
\multiput(3,3)(.2,0){25}{\line(1,0){.1}}
\scriptsize{
\put(4.25,3.75){$b I$}
\put(6.5,3.75){$\small{0}$}
\put(4.25,2){$\small{0}$}
\put(4.25,0.25){$\small{*}$}
\put(6.75,0.25){$\small{*}$}
\put(8.25,0.25){$\small{b}$}
\put(8.25,2.95){$\small{0}$}
\put(6.5,2){$b I$}
\put(1.5,2.9){$[T]_{\beta}=$}
\put(2.90,0){$\underbrace{\rule{54pt}{0pt}}_{\text{\tiny{$m$}}}$}
\put(5.65,0){$\underbrace{\rule{47pt}{0pt}}_{\text{\tiny{$m$}}}$}
\put(8,0){$\underbrace{\rule{19pt}{0pt}}_{\text{\tiny{$1$}}}$}
}
\end{picture}
$$
for some
$b \in \mathbb{C}$.
\item If
      $T \in \Hom(\mathfrak{h}_m/ \mathfrak{z}(\mathfrak{g}), \mathfrak{z}(\mathfrak{g}))$, we obtain
$$
\setlength{\unitlength}{7mm}
\begin{picture}(15,6)(0,-1)
\linethickness{0.3mm}
\put(3,0){\line(0,1){5}}
\put(9,0){\line(0,1){5}}
\put(3,0){\line(1,0){.2}}
\put(3,5){\line(1,0){.2}}
\put(9,5){\line(-1,0){.2}}
\put(9,0){\line(-1,0){.2}}
\linethickness{0.1mm}
\multiput(5.75,0.85)(0,.2){21}{\line(0,1){.1}}
\put(3,0.85){\line(1,0){6}}
\put(8,0){\line(0,1){5}}
\multiput(3,3)(.2,0){25}{\line(1,0){.1}}
\scriptsize{
\put(4.25,3.75){$\small{0}$}
\put(6.5,3.75){$\small{0}$}
\put(4.25,2){$\small{0}$}
\put(4.25,0.25){$\small{*}$}
\put(6.75,0.25){$\small{*}$}
\put(8.25,0.25){$\small{0}$}
\put(8.25,2.95){$\small{0}$}
\put(6.5,2){$\small{0}$}
\put(1.5,2.9){$[T]_{\beta}=$}
\put(2.90,0){$\underbrace{\rule{54pt}{0pt}}_{\text{\tiny{$m$}}}$}
\put(5.65,0){$\underbrace{\rule{47pt}{0pt}}_{\text{\tiny{$m$}}}$}
\put(8,0){$\underbrace{\rule{19pt}{0pt}}_{\text{\tiny{$1$}}}$}
}
\end{picture}
$$
\item If
$\widetilde{\rho} \in \der\left(\mathbb{C}[t]/ (t^{k + 1})\right)$,
 by straightforward calculation we obtain,
\begin{eqnarray}
\widetilde{\rho}(1) &=& 0 \quad \text{ and } \nonumber\\
\widetilde{\rho}\left( t^i\right) &=& i t^{i-1} \widetilde{\rho}(t) \quad \text{ for } i= 1, \dots, k. \nonumber
\end{eqnarray}
Hence, if
$\widetilde{\rho}(t)= \sum_{i=1}^k b_i t^i$ for some polinomial without independent term and
the matrix
$[\widetilde{\rho}]_{\mathcal{C}}= \overline{R}\left(\sum_{i=1}^k b_i t^i\right)$, that is
$$
[\widetilde{\rho}]_{\mathcal{C}}= \left[\begin{smallmatrix}
                          0  &       &        &        &        &  \\
                          0  & b_1    &        &        &   0     & \\
                          0  & b_2    & 2b_1   &        &        &  \\
                          \vdots      & \vdots    & \ddots   & \ddots &        &  \\
                          0 & b_{k-1} & \dots & (k-2)b_2 & (k-1)b_1 & \\
                          0      & b_{k}  & \dots   & (k-2)b_3 & (k-1)b_2 & kb_1
                \end{smallmatrix}\right].
$$
\end{enumerate}
From Theorem \ref{Teo:Principal} and using the Kronecker tensor product and the
matrix shapes obtain above leads to the desired conclusion.
\end{proof}

\begin{corollary}
Let
$m, k$ be the positive integer such that
$m> 0$. Then
$$
\dim \der(\mathfrak{h}_{m,k})= m(2m + 1)(k+1) + 2m(k+1)^2 + 2k + 1.
$$
\end{corollary}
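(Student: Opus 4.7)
The plan is to apply Theorem \ref{Teo:Derivations} directly and count the scalar parameters appearing in its explicit matrix form for an arbitrary element of $\der(\mathfrak{h}_{m,k})$. Each of the three summands in the claimed formula will arise from one of the three structural pieces of that matrix: the $A_r$-blocks (giving the first summand), the pair $R_1, R_2$ together with the bottom-right $(k+1)\times(k+1)$ block (giving the $2k+1$), and the two rectangular $\ast$-blocks in the bottom row (giving $2m(k+1)^2$).

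First I would count the $A_r$-blocks. Each block $[A_r]_{ij}$ is a lower-triangular Toeplitz matrix of size $(k+1)\times(k+1)$, hence carries $k+1$ free scalars. The constraints $[A_3]_{ij}=-[A_1]_{ji}$, $[A_2]_{ij}=[A_2]_{ji}$, $[A_4]_{ij}=[A_4]_{ji}$ reproduce the defining relations of $\mathfrak{sp}_{2m}(\mathbb{C})$ at the level of the $m\times m$ block pattern, giving $\dim\mathfrak{sp}_{2m}(\mathbb{C})=m(2m+1)$ independent block positions, and thus a contribution of $m(2m+1)(k+1)$. Next, $R_1=I_m\otimes R(p)$ is parameterized by an element $p\in\mathbb{C}[t]/(t^{k+1})$ contributing $k+1$ scalars, and $R_2=I_m\otimes\overline{R}(q)$ is parameterized by the value $q=\rho(t)$ of a derivation of $\mathbb{C}[t]/(t^{k+1})$; by Proposition \ref{derlocal} the constant term of $q$ must vanish, so only $k$ additional scalars appear. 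Finally, the two $\ast$-blocks in the bottom row, each of size $(k+1)\times m(k+1)$, are unconstrained, arising from $\Hom(\mathfrak{h}_m/\mathfrak{z}(\mathfrak{h}_m),\mathfrak{z}(\mathfrak{h}_m))\otimes \End(\mathbb{C}[t]/(t^{k+1}))$, and together they contribute $2m(k+1)^2$ parameters.

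The step requiring the most care, and the one I expect to be the main non-routine point, is verifying that these three groups of parameters are genuinely independent: a priori the $R_1$ piece (which is lower-triangular Toeplitz on the block diagonal of the top-left) could be absorbed into the diagonal blocks $[A_1]_{ii}$. Independence follows by inspecting the bottom-right $(k+1)\times(k+1)$ corner, whose entry is $2R(p)+\overline{R}(q)$ and depends only on $p$ and $q$: because $R(p)$ carries a free constant term whereas $\overline{R}(q)$ has none, one can solve uniquely for $(p,q)$ from this block, so that $R_1,R_2$ are determined unambiguously; subtracting $R_1+R_2$ from the upper-left $2m(k+1)\times 2m(k+1)$ submatrix then isolates the $A_r$ contributions, and the $\ast$-blocks occupy positions where no other piece appears. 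Summing the three independent counts gives
\begin{equation*}
\dim\der(\mathfrak{h}_{m,k})=m(2m+1)(k+1)+2m(k+1)^2+2k+1,
\end{equation*}
as claimed.
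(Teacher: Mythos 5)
Your count is correct and follows the same route the paper intends: the corollary is an immediate parameter count in the explicit matrix form of the preceding theorem, and your three groups of parameters ($m(2m+1)$ independent Toeplitz blocks of size $k+1$, the pair $(p,q)$ giving $2k+1$, and the two free bottom-row blocks giving $2m(k+1)^2$) match that matrix exactly. Your extra verification that $(p,q)$ is recoverable from the corner block $2R(p)+\overline{R}(q)$, so that the $R_1+R_2$ part cannot be absorbed into the diagonal $A_r$-blocks, is a welcome addition that the paper leaves implicit.
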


\begin{exampleSN}
Let
$\mathfrak{h}_{1,1}:= \mathfrak{h}_1 \otimes \mathbb{C}[t]/ (t^2)$ be the
current truncated Heisenberg Lie algebra of dimension
$6$ with a basis
$$
\mathcal{B}_{1,1}= \left\{e \otimes 1, e \otimes t, f \otimes 1, f \otimes t, z \otimes 1, z \otimes t\right\}\;.
$$
Then
$$
\setlength{\unitlength}{7mm}
\begin{picture}(15,7)(0,-1)
\linethickness{0.3mm}
\put(0,0){\line(0,1){5}}
\put(11.75,0){\line(0,1){5}}
\put(15,0){\line(0,1){5}}
\put(0,0){\line(1,0){.2}}
\put(0,5){\line(1,0){.2}}
\put(15,5){\line(-1,0){.2}}
\put(15,0){\line(-1,0){.2}}
\linethickness{0.1mm}
\multiput(5.5,1.25)(0,.2){20}{\line(0,1){.1}}
\put(0,1.25){\line(1,0){15}}
\multiput(0,3)(.2,0){59}{\line(1,0){.1}}
\scriptsize{
\put(0.25,4.4){$\small{a_{11} + d_{11}}$}
\put(3.25,4.4){$0$}
\put(0.25,3.4){$\small{a_{21} + d_{21}}$}
\put(2.25,3.4){$\small{a_{11} + d_{11} + d_{22}}$}

\put(9.25,3.4){$\small{b_{11}}$}
\put(9.25,4.5){$0$}
\put(5.75,4.5){$\small{b_{11}}$}
\put(5.75,3.4){$\small{b_{21}}$}

\put(0.25,2.25){$\small{c_{11}}$}
\put(3.25,2.25){$0$}
\put(0.25,1.5){$\small{c_{21}}$}
\put(3.25,1.5){$\small{c_{11}}$}

\put(0.25,0.75){$\small{x_{11}}$}
\put(0.25,0.25){$\small{x_{21}}$}
\put(2.25,0.75){$\small{x_{12}}$}
\put(2.25,0.25){$\small{x_{22}}$}
\put(5.75,0.75){$\small{x_{13}}$}
\put(5.75,0.25){$\small{x_{23}}$}
\put(8.25,0.75){$\small{x_{14}}$}
\put(8.25,0.25){$\small{x_{24}}$}

\put(11.8,0.75){$\small{2d_{11}}$}
\put(11.8,0.25){$\small{2d_{21}}$}
\put(13.5,0.75){$\small{0}$}
\put(13.05,0.25){$\small{2d_{11}+d_{22}}$}

\put(13.25,2.95){$\small{0}$}

\put(8.25,1.5){$\small{-a_{11} + d_{11}+ d_{22}}$}
\put(9.25,2.25){$0$}
\put(5.75,1.5){$\small{-a_{21} + d_{21}}$}
\put(5.75,2.25){$\small{-a_{11} + d_{11}}$}
\put(-1.85,3){$[\textbf{D}]_{\mathcal{B}_{1,1}}=$}
}
\end{picture}
$$
for all
$\textbf{D} \in Der(\mathfrak{h}_{1,1})$.
\end{exampleSN}


\begin{acknow}
The first author was supported by Pontificia Universidad Javeriana, Bogot\'a, Colombia under the research project with ID : 8433,
he would like tho thanks to his institution for its support.
The second author gratefully acknowledges the many helpful suggestions of Professor Leandro Cagliero of Universidad
Nacional de C\'ordoba, Argentina
during the preparation of the paper.
\end{acknow}




\begin{thebibliography}{99}

\bibitem{AM}
{\sc M. F. Atiyah} and {\sc I. G. Macdonald}, \newblock {\em Introduction to
Commutative Algebra.} Addison-Wesley Publishing Company, (1969).

\bibitem{BM} {\sc A. Borel} and {\sc G. D. Mostow}, \emph{On semi-simple automorphisms of Lie algebras.}
Ann. of Math. \textbf{61}, no. 3, (1955) 389–-405.

\bibitem{B2} {\sc N. Bourbaki}, \newblock {Elements of Mathematics-Algebra II.}
Springer-Verlag Berlin Heidelberg, (2003).

\bibitem{CT} {\sc L. Cagliero} and {\sc P. Tirao}
The cohomology of the cotangent bundle of Heisenberg groups.
Adv. Math. \textbf{181}, no. 2, (2004), 276–-307.

\bibitem{CO}{\sc P. Casati} and {\sc G. Ortenzi}
\emph{New integrable hierarchies from vertex operator representations of polynomial Lie algebras.} J. Geom. Phys. \textbf{56}, no. 3,(2006), 418--449.

\bibitem{CGS}{\sc C. Cicalo}, {\sc W. de Graaf} and {\sc C. Schneider}, \emph{Six-dimensional nilpotent Lie algebras}.
Linear Algebra Appl. \textbf{436}, (2012), 163–-89.

\bibitem{CR} {\sc C.W. Curtis} and {\sc I. Reiner}, \emph{Representation theory of finite groups and associative algebras.}
Reprint of the 1962 original. Wiley Classics Library.
A Wiley-Interscience Publication. John Wiley \& Sons, Inc., New York, 1988.

\bibitem{FGT} {\sc S. Fishel; I. Grojnowski} and {\sc C. Teleman},
\emph{The strong Macdonald conjecture and Hodge theory on the loop Grassmannian.} Ann. of Math. (2) \textbf{168}, no. 1, (2008), 175–-220.

\bibitem{GS}{\sc F. Guil--Asensio} and {\sc M. Saor\'in},
\emph{The group of automorphisms of a commutative algebra.}
Math. Z. \textbf{219}, no. 1, (1995),31--48.

\bibitem{H1} {\sc P. Hanlon},
\emph{Cyclic homology and the Macdonald conjectures},
Inven. Math. \textbf{86}, (1986), 131--159.

\bibitem{H2} {\sc P. Hanlon},
\emph{Some conjectures and results concerning the homology of nilpotent Lie algebras},
Adv. Math. \textbf{84}, (1990), 91--134.

\bibitem{H3} {\sc P. Hanlon},
\emph{Some remarkable combinatorial matrices},
J. Combin. Theory, Ser. A \textbf{59}, (1992), 219--239.


\bibitem{H4} {\sc P. Hanlon},
\emph{A survey of combinatorial problems in Lie algebra homology},
DIMACS Ser. Discrete Math. Theoret. Compu. Sci. \textbf{24}, (1996), 89--113.

\bibitem{HW} {\sc P. Hanlon} and {\sc M. Wachs},
\emph{On the property M conjecture for the Heisenberg Lie algebra},
 J. Combin. Theory Ser. A \textbf{99}, no. 2, (2002), 219–-231.

\bibitem{J1} { \sc N. Jacobson}, \emph{Lie Algebras},
Interscience Publishers, New York (1962).

\bibitem{J2} { \sc N. Jacobson}, \emph{A note on automorphisms of Lie algebras.}
Pacific J. Math. \textbf{12} (1962), 303-–315.

\bibitem{K} { \sc A. W. Knapp}, \emph{Lie groups beyond and Introduction},
Second edition. Progress in Mathematics, 140. Birkhäuser Boston, Inc., Boston, MA, 2002.

\bibitem{Ku}{\sc S. Kumar}, \emph{Homology of certain truncated Lie algebras}. Recent developments in quantum affine algebras and related topics (Raleigh, NC, 1998), 309–325,
Contemp. Math., \textbf{248}, Amer. Math. Soc., Providence, RI, 1999.

\bibitem{MM}{\sc G. Magnano} and {\sc  F. Magri}, \emph{Poisson-Nijenhuis structures, truncated loop algebras and Sato's KP hierarchy.
Integrable systems and quantum groups}, (Pavia, 1990), 147–172, World Sci. Publ., River Edge, NJ, 1992.

\bibitem{Ma}{\sc H. Matsumura}, \emph{Commutative ring theory.}
Cambridge Studies in Advanced Mathematics, {\bf 8}. Cambridge University Press, Cambridge, 1989.

\bibitem{OV}{\sc A. L. Onishchik} and {\sc E. B. Vinberg}, \emph{Lie groups and algebraic groups}. Springer-Verlag, Berlin, 1990.

\bibitem{RT}{\sc M. Ra{\"i}s} and {\sc P. Tauvel}, \emph{Indice et polyn{\^o}mes invariants pour certaines algèbres de Lie},
J. Reine Angew. Math. \textbf{425}, (1992), 123--140.

 \bibitem{St}{\sc R. Steinberg}, \emph{Automorphisms of classical Lie algebras.}
 Pacific J. Math. \textbf{11}, (1961), 1119–-1129.

\bibitem{T} { \sc S. J. Takiff}, \emph{Rings of invariant polynomials for a class of Lie algebras},
Trans. Amer. Math. Soc. \textbf{160}, (1971), 249--262.

\bibitem{Z} {\sc P. Zusmanovich},
\emph{Low-dimensional cohomology of current Lie algebras and analogs of the Riemann tensor for loop manifolds},
Linear Algebra Appl. \textbf{407}, (2005), 71--104.
\end{thebibliography}
\end{document}